\begin{document}
\title[Tilting and untilting for ideals in perfectoid rings]{Tilting and untilting for ideals in perfectoid rings}

\author[D. Dine]{Dimitri Dine}
\address{Wiesbaden, Germany}
\email{dimas.dine@gmail.com}

\author[R. Ishizuka]{Ryo Ishizuka}
\address{Department of Mathematics, Tokyo Institute of Technology, 2-12-1 Ookayama, Meguro, Tokyo 152-8551}
\email{ishizuka.r.ac@m.titech.ac.jp}


\keywords{Perfectoid rings, Tilting equivalence, Tate rings}

\subjclass[2020]{14G45,13J20}

\begin{abstract}
    For a perfectoid ring $R$ of characteristic $0$ with tilt $R^{\flat}$, we introduce and study a tilting map $(-)^{\flat}$ from the set of $p$-adically closed ideals of $R$ to the set of ideals of $R^{\flat}$ and an untilting map $(-)^{\sharp}$ from the set of radical ideals of $R^{\flat}$ to the set of ideals of $R$. The untilting map $(-)^{\sharp}$ is defined purely algebraically and generalizes the analytically defined untilting map on closed radical ideals of a perfectoid Tate ring of characteristic $p$ introduced in \cite{dine2022Topologicala}.
    We prove that the two maps
    \begin{equation*}
        J\mapsto J^{\flat}~\textrm{and}~I\mapsto I^{\sharp}
    \end{equation*}
    define an inclusion-preserving bijection between the set of ideals $J$ of $R$ such that the quotient $R/J$ is perfectoid and the set of $p^{\flat}$-adically closed radical ideals of $R^{\flat}$, where $p^{\flat}\in R^{\flat}$ corresponds to a compatible system of $p$-power roots of a unit multiple of $p$ in $R$.

    Finally, we prove that the maps $(-)^{\flat}$ and $(-)^{\sharp}$ send (closed) prime ideals to prime ideals and thus define a homeomorphism between the subspace of $\Spec(R)$ consisting of prime ideals $\mathfrak{p}$ of $R$ such that $R/\mathfrak{p}$ is perfectoid and the subspace of $\Spec(R^{\flat})$ consisting of $p^{\flat}$-adically closed prime ideals of $R^{\flat}$. In particular, we obtain a generalization and a new proof of the main result of \cite{dine2022Topologicala} which concerned prime ideals in perfectoid Tate rings.
\end{abstract}

\maketitle 

\setcounter{tocdepth}{1}
\tableofcontents

\section{Introduction}

Throughout this paper, we fix a prime number $p$. 

Since their first appearance in Scholze's thesis \cite{scholze2012Perfectoida}, perfectoid algebras and spaces have revolutionized both $p$-adic geometry and mixedcharacteristic commutative algebra, leading, among many other applications, to Andr\'e's proof of the direct summand conjecture \cite{andre2018Conjecture, andre2018Lemme}.
In the realm of commutative algebra, there is also the notion of (integral) perfectoid rings, introduced in \cite[Definition 3.5]{bhatt2018Integral}.
The class of (integral) perfectoid rings includes not only rings of power-bounded elements of perfectoid algebras in the sense of \cite{scholze2012Perfectoida} (and, more generally, of perfectoid Tate rings in the sense of \cite{fontaine2013Perfectoides}) but also contains some rings which need not even be $p$-torsion-free. 

In \cite{dine2022Topologicala}, one of the authors proved that, for a perfectoid Tate ring $A$ with tilt $A^{\flat}$, there is a homeomorphism between the space of so-called \textit{spectrally reduced prime ideals} of the Tate ring $A$ (by a theorem of Bhatt and Scholze, these coincide with the prime ideals $\mathfrak{p}$ of $A$ such that the quotient $A/\mathfrak{p}$ is again a perfectoid Tate ring, see \cite[Theorem 4.4]{dine2022Topologicala}) and the space of closed radical ideals of $A^{\flat}$, the two spaces being endowed with their respective Zariski topologies (\cite[Theorem 4.16]{dine2022Topologicala}).
In particular, a perfectoid Tate ring is an integral domain if and only if its tilt is an integral domain.
In this paper, we use algebraic techniques (in particular, a purely algebraic definition of the untilting operation, see \Cref{TiltUntiltIdeal}) to generalize the main notions and results of \cite{dine2022Topologicala} from the case of perfectoid Tate rings to perfectoid rings in the sense of \cite{bhatt2018Integral}.

In this paper, we use perfectoid rings in the sense of \cite{bhatt2018Integral}.
For any perfectoid ring $R$ of characteristic $0$, we fix some unit multiple $vp$ of $p$ which admits a compatible system of $p$-power roots of $vp$ in $R$ (by \cite[Lemma 3.9]{bhatt2018Integral}, such a unit multiple of $p$ always exists) and we fix an element $p^{\flat}\in R^{\flat}$ of the tilt $R^{\flat} \cong \varprojlim_{F} R$ of $R$ corresponding to a compatible system of $p$-power roots of $vp$ (it is adapted to the notation used in \cite{cesnavicius2023Purity}. Note that a compatible system of \(p\)-power roots of \(p\) does not necessarily exist).

Then our main result can be stated as follows.
The tilting operation \((-)^\flat\) for rings and ideals is defined in \Cref{TiltRing} and \Cref{TiltUntiltIdeal}.

\begin{maintheorema}[{\Cref{CorrespPerfdIdeals} and \Cref{CorrespPrimePerfdIdeals}}] \label{MainTheoremA}
    Let $R$ be a perfectoid ring of characteristic $0$. Then the map
    \begin{equation*}
        J\mapsto J^{\flat} \defeq \set{f = (f^{(n)})_n}{f^{(n)} \in J ~\text{for all}~ n}
    \end{equation*}
    is an inclusion-preserving bijection between the set of ideals $J$ of $R$ such that the quotient $R/J$ is a perfectoid ring and the set of $p^{\flat}$-adically closed radical ideals of $R^{\flat}$. 
    
    Furthermore, the restriction of this bijection yields a homeomorphism between the subspace of $\Spec(R)$ consisting of prime ideals $\mathfrak{p}$ of $R$ such that the quotient $R/\mathfrak{p}$ is perfectoid and the subspace of $\Spec(R^{\flat})$ consisting of $p^{\flat}$-adically closed prime ideals of $R^{\flat}$.
\end{maintheorema}

As a consequence of the above theorem, we obtain a new and purely algebraic proof of the main theorem of \cite{dine2022Topologicala} (namely, \cite[Theorem 4.16]{dine2022Topologicala}).
This proof does not make use of the homeomorphism between the Berkovich spectrum of a perfectoid Tate ring and the Berkovich spectrum of its tilt.



\begin{notation} \label{Notation}

    For any element $t\in R$ in a ring $R$ and any $R$-module $M$ we denote by $M[t^{\infty}]$ the \textit{$t$-power-torsion submodule} of $M$; that is,
    \begin{equation*}
        M[t^{\infty}]\defeq \set{x \in M}{\exists n \in \setZ_{\geq 0}, t^n x = 0}.
    \end{equation*}
    Note that, for any $M$ and $t$, the quotient $M/M[t^{\infty}]$ is equal to the image of the canonical map $M\to M[1/t]$. For a ring $R$ of characteristic $0$, we denote the image $R/R[p^{\infty}]$ of the canonical map $R\to R[1/p]$ by $\overline{R}$.

    Let \(R\) be a ring and let \(a\) be an element of \(R\).
    An ideal \(I\) of \(R\) is \emph{\(a\)-adically closed} if it is a closed subset of the \(a\)-adically topological ring \(R\) with respect to the subspace topology.
    An ideal \(I\) of \(R\) is \emph{derived \(a\)-complete} if it is derived \(a\)-complete as an \(R\)-module.

    Let $A_{0}$ be a ring and $t\in A_{0}$ be any element.
    The Tate ring $A_{0}[1/t]$ is the Tate ring defined by the pair of definition $(A_{0}/A_{0}[t^{\infty}], (t))$.
    For a Tate ring $A=A_{0}[1/t]$, we denote by $\widehat{A^{u}}$ the \textit{uniform completion} of $A$ as defined in \cite[Definition 5.4]{ishizuka2023Mixed}. For any topological ring $A$ we denote by $A^{\circ}$ the set of power-bounded elements of $A$. 
    
    For a perfectoid ring \(R\) (in the sense of \cite[Definition 3.5]{bhatt2018Integral}),
    we call an element \(\pi \in R\) a \textit{perfectoid element} if \(R\) is \(\pi\)-adically complete and \(\pi^p\) divides \(p\) in \(R\).
    Such an element always exists in any perfectoid ring, by definition, but it is not unique.

    If $\pi\in R$ is a perfectoid element in a perfectoid ring $R$, then $R/R[\pi^{\infty}]$ is again a perfectoid ring (\cite[2.1.3]{cesnavicius2023Purity}) and the Tate ring $R[1/\pi]=(R/R[\pi^{\infty}])[1/\pi]$ is perfectoid in the sense of Fontaine \cite{fontaine2013Perfectoides} (recall that every complete Tate ring \(A\) can be endowed with a norm \(\norm{\cdot}_A\) defining its topology, and if $t\in A$ is a topologically nilpotent unit of $A$, one can choose the norm \(\norm{\cdot}_A\) such that $t$ is multiplicative with respect to \(\norm{\cdot}_A\), see, e.g., \cite[Definition 2.26 and Lemma 2.27]{nakazato2022Finite}; in particular, every complete Tate ring is a Banach ring in the sense of \cite{fontaine2013Perfectoides}). 

    Assume that $R$ is a perfectoid ring of characteristic $0$. Let us write $(W(R^{\flat}), (\xi))$ for the perfect prism corresponding to the perfectoid ring $R$ via \cite[Theorem 3.10]{bhatt2022Prismsa}, where $\xi$ is a distinguished element of $W(R^{\flat})$ with Witt vector coordinates $(\xi_{0}, \xi_{1}, \dots)$.
    By \cite[Lemma 3.9]{bhatt2018Integral}, there exists some unit multiple $vp \in R$ of $p$ such that $vp$ has a compatible system of $p$-power roots $\{(vp)^{1/p^{n}}\}_{n\geq 0}$ in $R$. Furthermore, one can adjust the choice of $\xi$ so that $p^{\flat}=\xi_{0}$ where \(p^\flat \defeq ((vp)^{1/p^n})_n \in R^\flat \cong \varprojlim_{x\mapsto x^{p}}R\), see \cite[2.1.2]{cesnavicius2023Purity}.   

    For every perfectoid ring $R$ of characteristic $0$, we fix an element $p^{\flat}\in R^{\flat}$ and a distinguished element $\xi\in W(R^{\flat})$ as above throughout this paper (in particular, we always assume that $\xi$ is chosen in such a way that $\xi_0=p^{\flat}$) and we consider $R$ (respectively, $R^{\flat}$) as topological rings equipped with the $p$-adic (respectively, the $p^{\flat}$-adic) topology.

\end{notation}

\subsection*{Authors' Contribution}
The second author conceived the idea of reconstructing the first author's paper \cite{dine2022Topologicala} in a purely algebraic approach and wrote a preliminary draft.
The first author refined the draft and clarified the connection with the previous result.

\subsection*{Acknowledgement}
We would like to thank Shinnosuke Ishiro, Kei Nakazato, and Kazuma Shimomoto for reading and providing feedback on this paper.


\section{Perfectoid Ideals}

In this section, we work with the following notion of perfectoid ideals.







\begin{definition}
    For any perfectoid ring \(R\),
    an ideal \(J\) of \(R\) is called a \textit{perfectoid ideal}
    if the quotient \(R/J\) is a perfectoid ring.
    Any perfectoid ideal is $p$-adically closed and derived \(p\)-complete since any perfectoid ring is \(p\)-adically complete.
\end{definition}



Recall from \cite{bhatt2022Prismsa} that a \textit{semiperfectoid ring} $S$ is a derived $p$-complete ring that can be written as a quotient of some perfectoid ring $R$.
Recall also from loc.~cit., Corollary 7.3, that, for every semiperfectoid ring $S$, there exists an initial object $S \to S_{\perfd}$ in the category of perfectoid $S$-algebras; the ring $S_{\perfd}$ is called the \textit{perfectoidization} of $S$.

\begin{lemma}
    Assume that \(R\) is a perfectoid ring and \(J\) is a derived \(p\)-complete ideal of \(R\).
    Then \(J\) is a perfectoid ideal if and only if \(J = J_{\perfd}\)
    where \(J_{\perfd} \defeq \ker(R \to (R/J)_{\perfd})\).
\end{lemma}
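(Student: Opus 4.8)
The plan is to reduce the statement to the universal property of the perfectoidization together with the fact that, for a semiperfectoid ring $S$, the canonical map $S\to S_{\perfd}$ is surjective. First I would check that $R/J$ is semiperfectoid: $R$ is perfectoid, hence $p$-adically complete and in particular derived $p$-complete, and $J$ is derived $p$-complete by hypothesis, so the cofiber $R/J$ of the injection $J\hookrightarrow R$ is again derived $p$-complete; thus $R/J$ is a semiperfectoid ring and $(R/J)_{\perfd}$ is defined. Write $S\defeq R/J$ and let $\eta\colon S\to S_{\perfd}=(R/J)_{\perfd}$ be the canonical map. The key point is that the composite $R\to S\xrightarrow{\eta}S_{\perfd}$ is surjective; this is where I would invoke \cite{bhatt2022Prismsa} (the prism-theoretic construction of the perfectoidization realizes $(R/\mathfrak a)_{\perfd}$ as $R/\mathfrak b$ for a suitable ideal $\mathfrak b\supseteq\mathfrak a$, so $S_{\perfd}$ is a quotient of $R$; if needed I would extract this surjectivity from that construction). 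Granting it, $(R/J)_{\perfd}\cong R/J_{\perfd}$ with $J_{\perfd}=\ker(R\to(R/J)_{\perfd})$, the map $\eta$ is identified with the projection $R/J\twoheadrightarrow R/J_{\perfd}$, and since $R\to(R/J)_{\perfd}$ factors through $R/J$ we have $J\subseteq J_{\perfd}$; consequently $\eta$ is an isomorphism if and only if $J=J_{\perfd}$.

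With this reduction the two implications are immediate. If $J=J_{\perfd}$, then $\eta$ is an isomorphism, so $R/J\cong(R/J)_{\perfd}$ is perfectoid, i.e.\ $J$ is a perfectoid ideal. Conversely, if $J$ is a perfectoid ideal, then $S=R/J$ is itself a perfectoid $S$-algebra via the identity map, so by the universal property of $S_{\perfd}$ as the initial perfectoid $S$-algebra there is an $S$-algebra homomorphism $\rho\colon S_{\perfd}\to S$ with $\rho\circ\eta=\operatorname{id}_S$; hence $\eta$ is (split) injective, and being also surjective it is an isomorphism, so $J=J_{\perfd}$.

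The only genuine input is the surjectivity of $S\to S_{\perfd}$ for semiperfectoid $S$ (equivalently, that $(R/J)_{\perfd}$ is a quotient of $R$); everything else is formal manipulation with the universal property, plus the routine observation that $R/J$ is derived $p$-complete so that $(R/J)_{\perfd}$ exists at all. So the step I expect to be the main obstacle — and the one I would state precisely with an exact reference — is exactly this surjectivity. If one preferred not to cite it directly, an alternative is to use the retraction $\rho$ to obtain injectivity of $\eta$ in the perfectoid case and, in the converse direction, to argue that a semiperfectoid ring injecting into its perfectoidization must coincide with it; but this ultimately rests on the same structural fact about $S_{\perfd}$, so I would isolate and cite it at the outset.
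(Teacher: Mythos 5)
Your proof is correct and follows essentially the same route as the paper's: observe that $R/J$ is semiperfectoid because $J$ and $R$ are derived $p$-complete, invoke the surjectivity of $R\to(R/J)_{\perfd}$ from \cite[Theorem 7.4]{bhatt2022Prismsa}, and then both directions fall out formally from the universal property. The paper states this more tersely but uses exactly the same two ingredients.
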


\begin{proof}
    Since \(J\) is derived \(p\)-complete, \(R/J\) is a semiperfectoid ring and thus
    the perfectoidization \((R/J)_{\perfd}\) is a universal perfectoid ring over \(R/J\).
    By \cite[Theorem 7.4]{bhatt2022Prismsa}, the canonical map \(R \to (R/J)_{\perfd}\) is surjective.
\end{proof}

To check the perfectoid property of rings and ideals,
we can sometimes use the following lemma.

\begin{lemma} \label{Semiperfectoid}
    Let $R$ be a semiperfectoid ring and suppose that there exists a perfectoid ring $R'$ and an injective map $R\hookrightarrow R'$. Then $R$ is a perfectoid ring.
    
    In particular, if a derived $p$-complete ideal $J$ in a perfectoid ring $R$ can be written as an intersection of perfectoid ideals, then $J$ is also a perfectoid ideal.
\end{lemma}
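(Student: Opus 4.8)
The plan is to deduce both assertions from the universal property of the perfectoidization together with the surjectivity of the perfectoidization map for semiperfectoid rings (\cite[Corollary 7.3 and Theorem 7.4]{bhatt2022Prismsa}), exactly in the spirit of the two preceding lemmas. The one genuine idea is that an injection of a semiperfectoid ring into a perfectoid ring forces the (always surjective) perfectoidization map to be injective as well, hence an isomorphism.

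For the first assertion I would regard $R'$ as a perfectoid $R$-algebra via the given injection $\iota\colon R\hookrightarrow R'$. The universal property of $R\to R_{\perfd}$ then produces a factorization $\iota\colon R\to R_{\perfd}\to R'$, so that $\ker(R\to R_{\perfd})\subseteq\ker(\iota)=0$ and $R\to R_{\perfd}$ is injective. Since $R$ is semiperfectoid, $R\to R_{\perfd}$ is surjective by \cite[Theorem 7.4]{bhatt2022Prismsa}, hence an isomorphism, and therefore $R\cong R_{\perfd}$ is perfectoid.

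For the second assertion, write $J=\bigcap_{i\in I}J_i$ with each $J_i$ a perfectoid ideal of $R$. First I would observe that $R/J$ is semiperfectoid: it is a quotient of the perfectoid ring $R$, and it is derived $p$-complete since $J$ and $R$ are (two-out-of-three in the short exact sequence $0\to J\to R\to R/J\to 0$). By the lemma characterizing perfectoid ideals via $J=J_{\perfd}$, it then suffices to show $J=J_{\perfd}$, where $J_{\perfd}\defeq\ker(R\to(R/J)_{\perfd})$; the inclusion $J\subseteq J_{\perfd}$ is automatic. For the reverse inclusion, fix $i\in I$: as $J\subseteq J_i$, the perfectoid ring $R/J_i$ is an $(R/J)$-algebra, so by the universal property of $(R/J)_{\perfd}$ the projection $R/J\twoheadrightarrow R/J_i$ factors through $(R/J)_{\perfd}$; precomposing with $R\twoheadrightarrow R/J$ gives $J_{\perfd}\subseteq\ker(R\twoheadrightarrow R/J_i)=J_i$. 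Intersecting over all $i$ yields $J_{\perfd}\subseteq\bigcap_i J_i=J$, as wanted.

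I do not expect a serious obstacle here; the statement is soft once the perfectoidization machinery is in place, and the only points needing (minor) care are the verification that $R/J$ is again semiperfectoid and the factorization-through-perfectoidization argument above. As an alternative route for the ``in particular'' part, one could instead embed $R/J$ into the product $\prod_{i\in I}R/J_i$ and apply the first assertion, after checking that an arbitrary product of perfectoid rings is perfectoid (which is straightforward, since $p$-adic completeness, surjectivity of Frobenius modulo $p$, and principality of the kernel of $\theta$ all pass to products); the direct argument given above is shorter and avoids this.
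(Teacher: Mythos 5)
Your proof of the first assertion is exactly the paper's argument: factor the injection $R\hookrightarrow R'$ through $R\to R_{\perfd}$ via the universal property of perfectoidization, deduce injectivity of $R\to R_{\perfd}$, and combine with surjectivity from \cite[Theorem 7.4]{bhatt2022Prismsa}.

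For the ``in particular'' part you and the paper diverge. The paper deduces it from the first assertion by embedding $R/J$ into $\prod_i R/J_i$ and invoking the fact that an arbitrary product of perfectoid rings is perfectoid (\cite[Example 3.8(8)]{bhatt2019Regular}). You instead argue directly: check that $R/J$ is semiperfectoid (derived $p$-completeness of $J$ and $R$ gives it for $R/J$), use the universal property of $(R/J)_{\perfd}$ once per index $i$ to get $J_{\perfd}\subseteq J_i$, and intersect, so $J_{\perfd}=J$. This is correct and logically self-contained; it avoids having to know that infinite products of perfectoids are perfectoid, at the price of not literally reducing to the first assertion (so your proof makes the ``in particular'' a parallel consequence of the perfectoidization machinery rather than a corollary). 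You also correctly identify the paper's product-embedding route as the alternative. One small remark: in the direct route you are implicitly appealing to the unnamed lemma just above this one (perfectoid ideal $\iff$ $J=J_{\perfd}$ for derived $p$-complete $J$); it would be cleaner to cite it explicitly rather than say ``by the lemma characterizing perfectoid ideals,'' but the logic is sound.
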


\begin{proof}
    We first deduce the second assertion of the lemma from the first. So, let $J$ be a derived $p$-complete ideal in a perfectoid ring $R$ which can be written as an intersection of a family $(J_{i})_{i}$ of perfectoid ideals.
    Any product of perfectoid rings is again perfectoid (\cite[Example 3.8(8)]{bhatt2019Regular}).
    In particular, $\prod_{i}R/J_{i}$ is a perfectoid ring, so the canonical map $R/J\hookrightarrow \prod_{i}R/J_{i}$ shows that the second assertion of the lemma is a consequence of the first. 

    Now let $R$ be a semiperfectoid ring admitting an injection $R\hookrightarrow R'$ into a perfectoid ring $R'$.
    By the universal property of perfectoidization, the injective map $R\hookrightarrow R'$ factors through the canonical map $R\to R_{\perfd}$.
    It follows that the canonical map $R\to R_{\perfd}$ is injective.
    But $R\to R_{\perfd}$ is also surjective by \cite[Theorem 7.4]{bhatt2022Prismsa}.
\end{proof}

In \Cref{CorrespSpecRed} we will clarify the relationship between the above notion of perfectoid ideals and the notion of spectrally reduced ideals in a seminormed ring as introduced in \cite{dine2022Topologicala}.
To do this, we define the notion of spectrally reduced ideals in the setting of Tate rings and explain how this definition relates to the previous one for seminormed rings.

\begin{definition}[cf. {\cite[Definition 2.16]{dine2022Topologicala}}] \label{DefSpecRed}
    For a Tate ring $A$, we call an ideal $\mcalJ \subsetneq A$ \textit{spectrally reduced} if 
    \begin{equation*}
        \mcalJ=\bigcap_{\ker(\phi)\supseteq \mcalJ}\ker(\phi)
    \end{equation*}
    where $\phi$ ranges over the continuous multiplicative seminorms on $A$ with $\ker(\phi)\supseteq \mcalJ$.
\end{definition}

\begin{remark} \label{RemarkSpecRed}
    By \cite[Remark 2.19]{dine2022Topologicala}, an ideal $\mcalJ$ of a Tate ring $A$ is spectrally reduced in the above sense if and only if $\mcalJ$ is a spectrally reduced ideal of the seminormed ring $(A, \norm{\cdot})$ in the sense of \cite[Definition 2.16]{dine2022Topologicala}, for some (or, equivalently, any) seminorm $\norm{\cdot}$ defining the topology on $A$ such that $A$ admits a topologically nilpotent unit which is multiplicative with respect to the seminorm $\norm{\cdot}$ (for the latter condition, see also \cite[Remark 2.13]{dine2022Topologicala}).

    For example, if $(A_{0}, (t))$ is a pair of definition of a Tate ring $A$, then, by definition, $t$ is a topologically nilpotent unit of $A$ which is multiplicative with respect to the canonical extension \(\norm{\cdot}_A\) to $A$ of the $t$-adic seminorm on $A_{0}$ (see \cite[Lemma 2.27]{nakazato2022Finite}).
\end{remark}

In the proof of \Cref{CorrespSpecRed} and in \Cref{SectionCorrespPrimeIdeals},
we use the \(t\)-saturation of ideals:

\begin{definition} \label{pSaturation}
    Let \(R\) be a ring, $t\in R$ and let $J$ be an ideal of \(R\).
    The \textit{\(t\)-saturation} \(\widetilde{J}^{t}\) of \(J\) is the ideal defined by
    \begin{equation*}
        \widetilde{J}^{t} \defeq \set{f \in R}{t^n f \in J \text{ for some \(n \geq 0\)}},
    \end{equation*}
    that is, \(\widetilde{J}^{t}\) is the inverse image of \(J[1/t]\) via the canonical map \(R \to R[1/t]\)
    and thus \(R/\widetilde{J}^{t}\) is \(t\)-torsion-free (more precisely, \(R/\widetilde{J}^{t}\)=\((R/J)/(R/J)[t^{\infty}]\)). If $t=p$, then we just write $\widetilde{J}$ instead of $\widetilde{J}^{p}$ for the $p$-saturation of $J$.
\end{definition}



\begin{lemma} \label{CorrespSpecRed}



    Let $R$ be a perfectoid ring of characteristic $0$, let $\pi\in R$ be a perfectoid element in $R$ and let $J$ be a derived $p$-complete ideal of $R$ such that $\pi\notin J$.
    If $J$ is a perfectoid ideal, then the ideal $J[1/\pi]$ of the perfectoid Tate ring $R[1/\pi]$ is spectrally reduced, and, if we assume that $R/J$ is $\pi$-torsion-free, the converse is also true.

    In general, $J$ is a perfectoid ideal if and only if $J$ is a radical ideal and the ideal $J[1/\pi]$ of the perfectoid Tate ring $R[1/\pi]$ is spectrally reduced.
\end{lemma}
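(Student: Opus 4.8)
Write $A \defeq R[1/\pi]$ for the perfectoid Tate ring attached to $\pi$ and $\mcalJ \defeq J[1/\pi] = JA$ for the extension of $J$; since localization is exact, $A/\mcalJ = (R/J)[1/\pi]$, and likewise $(\widetilde{J}^{\pi})[1/\pi] = \mcalJ$ with $R/\widetilde{J}^{\pi}$ a $\pi$-torsion-free subring of $A/\mcalJ$. The plan is to move between the perfectoid property of the \emph{ring} $R/J$ (or $R/\widetilde{J}^{\pi}$) and the perfectoid/uniform property of the \emph{Tate ring} $A/\mcalJ$, using on the one hand that $R'[1/\varpi]$ is a perfectoid Tate ring for every perfectoid ring $R'$ and perfectoid element $\varpi$ (\Cref{Notation}), and on the other hand that the uniform completion of a quotient of a perfectoid Tate ring by a closed ideal is again perfectoid; \Cref{Semiperfectoid} is the bridge back to the algebraic side.

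For the first implication ($J$ perfectoid $\Rightarrow \mcalJ$ spectrally reduced): if $R/J$ is perfectoid then, as $\pi\notin J$, the image of $\pi$ in $R/J$ is a nonzero perfectoid element, so $A/\mcalJ=(R/J)[1/\pi]$ is a nonzero perfectoid Tate ring, hence uniform, hence its spectral seminorm is a norm. As the spectral seminorm of a Banach ring is the supremum over its continuous multiplicative seminorms, its vanishing locus is the intersection of their kernels; pulling this condition back along $A\to A/\mcalJ$ and using that the continuous multiplicative seminorms on $A/\mcalJ$ are exactly those on $A$ with kernel containing $\mcalJ$, we obtain $\bigcap_{\ker(\psi)\supseteq\mcalJ}\ker(\psi)=\mcalJ$, which is \Cref{DefSpecRed} for $\mcalJ$ (and $\mcalJ\subsetneq A$, since $\pi$ is non-nilpotent modulo $J$). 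For the converse under the extra hypothesis that $R/J$ is $\pi$-torsion-free: being an intersection of kernels of continuous seminorms, $\mcalJ$ is closed in $A$, so $A/\mcalJ$ is a complete Tate ring whose uniform completion $\widehat{(A/\mcalJ)^{u}}$ is perfectoid, and $A/\mcalJ\to\widehat{(A/\mcalJ)^{u}}$ is injective (its kernel, the elements of spectral seminorm $0$, vanishes by spectral reducedness of $\mcalJ$). Then $R/J\hookrightarrow(R/J)[1/\pi]=A/\mcalJ\hookrightarrow\widehat{(A/\mcalJ)^{u}}$, and the composite lands in the power-bounded subring, a perfectoid ring; since $R/J$ is semiperfectoid ($J$ derived $p$-complete, $R$ perfectoid), \Cref{Semiperfectoid} gives that $R/J$ is perfectoid.

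For the ``in general'' equivalence, the forward direction uses the first implication for $\mcalJ$ and, for radicality of $J$, the reducedness of $R/J$: the quotient $(R/J)/(R/J)[\pi^{\infty}]$ is a $\pi$-torsion-free perfectoid ring, hence embeds into the uniform (so reduced) Tate ring $(R/J)[1/\pi]$ and is reduced, while the nilradical of $R/J$ lies in $(R/J)[\pi^{\infty}] = (R/J)[p^{\infty}]$, which contains no nonzero nilpotent. Conversely, if $J$ is radical and $\mcalJ$ is spectrally reduced, split $J$ according to whether a prime contains $\pi$: $J=\widetilde{J}^{\pi}\cap\sqrt{J+(\pi)}$. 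The ideal $\widetilde{J}^{\pi}$ is perfectoid by the previous paragraph applied to it — $R/\widetilde{J}^{\pi}$ is $\pi$-torsion-free, $(\widetilde{J}^{\pi})[1/\pi]=\mcalJ$ is spectrally reduced, $\pi\notin\widetilde{J}^{\pi}$ (using $J$ radical), and $\widetilde{J}^{\pi}$ is derived $p$-complete because $\widetilde{J}^{\pi}/J=\ker(\pi\colon R/J\to R/J)$ (again using $J$ radical, so that the $\pi$-power-torsion of the reduced ring $R/J$ is already killed by $\pi$), the kernel of a morphism of derived $p$-complete modules. The ideal $\sqrt{J+(\pi)}$ is perfectoid because in $R/\sqrt{J+(\pi)}$ one has $\pi=0$, hence $p=0$, so this ring is a reduced $\mathbb{F}_{p}$-algebra with surjective Frobenius (being a quotient of $R/p$), i.e.\ a perfect $\mathbb{F}_{p}$-algebra, hence perfectoid (and derived $p$-complete since $p=0$). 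As $J$ is derived $p$-complete and an intersection of perfectoid ideals, \Cref{Semiperfectoid} shows it is perfectoid.

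The main obstacle is the interface with the analytic theory: the facts that perfectoid Tate rings are uniform (so that their spectral seminorm is a norm) and that the uniform completion of a quotient of a perfectoid Tate ring by a closed ideal is perfectoid — the second being essentially the content of the uniform completion $\widehat{(-)^{u}}$ recalled in \Cref{Notation}. A secondary delicate point is the claim, used in the forward direction of the last statement, that a perfectoid ideal is radical; this comes down to the bounded $p$-power-torsion submodule of a perfectoid ring having no nonzero nilpotent, which I would either cite from the structure theory of perfectoid rings or reduce to the reducedness of $p$-torsion-free perfectoid rings.
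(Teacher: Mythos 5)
Your argument is correct and reaches the same conclusions as the paper's, but by a genuinely different route in two of the three steps. In the converse under $\pi$-torsion-freeness, the paper invokes \cite[Theorem 4.4]{dine2022Topologicala} to see that $(R/J)[1/\pi]$ is already perfectoid Tate (and hence \emph{equal} to its uniform completion $(R/J)_{\perfd}[1/\pi]$), then deduces injectivity of $R/J\to (R/J)_{\perfd}$; you instead only need the \emph{injectivity} of $A/\mcalJ$ into its uniform completion (which holds because the spectral seminorm descends to a norm on $A/\mcalJ$ by spectral reducedness) and then route through the power-bounded subring and \Cref{Semiperfectoid}, which is slightly more economical. In the general case, the paper writes $J=\widetilde{J}^{\pi}\cap\bigcap_{\mfrakq}\mfrakq$ over prime ideals $\mfrakq\supseteq J$ not containing $\widetilde{J}^{\pi}$ (and so containing $\pi$, hence $p$), and derives the required derived $p$-completeness and completeness of $R/\widetilde{J}^\pi$ topologically (closedness of $\mcalJ$ gives classical $\pi$-adic, hence $p$-adic, completeness of $R/\widetilde{J}^\pi$). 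You instead use the finite decomposition $J=\widetilde{J}^{\pi}\cap\sqrt{J+(\pi)}$ and get derived $p$-completeness of $\widetilde{J}^{\pi}$ by a purely homological argument (extension of $J$ by the kernel of $\pi$ on the reduced ring $R/J$); this is a cleaner bookkeeping of the same content and avoids the excursion to classical completeness. Two points deserve more care. First, your description of the fact that $\widehat{(A/\mcalJ)^u}$ is perfectoid as ``essentially the content of the uniform completion recalled in \Cref{Notation}'' understates it: \Cref{Notation} merely points to a \emph{definition} of $\widehat{(-)^u}$, whereas the perfectoidness of $\widehat{(A/\mcalJ)^u}$ is the theorem of Bhatt--Scholze cited in the paper as \cite[Theorem 4.4]{dine2022Topologicala}; cite that instead. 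Second, your justification that perfectoid ideals are radical (ending ``$(R/J)[p^{\infty}]$, which contains no nonzero nilpotent'') is not actually an argument; as you note, one should simply cite the reducedness of perfectoid rings, e.g.\ \cite[2.1.3]{cesnavicius2023Purity}, which is exactly what the paper does elsewhere.
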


\begin{proof}
    First suppose that $J$ is a perfectoid ideal of $R$. By definition, the quotient \(R/J\) is a perfectoid ring and thus the Tate ring \((R/J)[1/\pi]\) is a perfectoid Tate ring.
    Note that \((R/J)[1/\pi]\) is isomorphic (as a Tate ring) to the quotient of \(R[1/\pi]\) by the ideal \(J[1/\pi]\).
    Since every perfectoid Tate ring is uniform, we deduce that \(J[1/\pi]\) is a spectrally reduced ideal of $R[1/\pi]$.

    Conversely, first we assume that \(R/J\) is \(\pi\)-torsion-free and \(J[1/\pi]\) is spectrally reduced.
    Then the quotient \(R[1/\pi]/J[1/\pi] \cong (R/J)[1/\pi]\) is a perfectoid Tate ring, by a theorem of Bhatt and Scholze (see \cite[Theorem 4.4]{dine2022Topologicala}).
    By the proof of loc.~cit., $(R/J)_{\perfd}[1/\pi]$ is equal to the uniform completion of $(R/J)[1/\pi]$. Since perfectoid Tate rings are uniform, we conclude that the map $(R/J)[1/\pi]\to (R/J)_{\perfd}[1/\pi]$ is an isomorphism.
    The \(\pi\)-torsion-freeness of \(R/J\) implies that the canonical map \(R/J \to (R/J)[1/\pi] \cong (R/J)_{\perfd}[1/\pi]\) is injective
    and, in particular, \(R/J \to (R/J)_{\perfd}\) is injective.
    Since \(R/J \to (R/J)_{\perfd}\) is surjective by \cite[Theorem 7.4]{bhatt2022Prismsa}, \(R/J\) is isomorphic to \((R/J)_{\perfd}\) and thus \(J\) is a perfectoid ideal of \(R\).

    In the general case, suppose that $J$ is a derived $p$-complete radical ideal of $R$ such that the ideal $J[1/\pi]$ of $R[1/\pi]$ is spectrally reduced.
    Note that $\widetilde{J}^{\pi}[1/\pi]=J[1/\pi]$ where \(\widetilde{J}^{\pi}\) is the \(\pi\)-saturation of \(J\) as defined above in \Cref{pSaturation}.
    Since $J[1/\pi]$ is spectrally reduced, it is in particular (topologically) closed in \(R[1/\pi]\) and thus the isomorphism of Tate rings 
    \begin{equation*}
        (R/\widetilde{J}^{\pi})[1/\pi]=R[1/\pi]/\widetilde{J}^{\pi}[1/\pi]=R[1/\pi]/J[1/\pi]
    \end{equation*}
    shows that \((R/\widetilde{J}^{\pi})[1/\pi]\) is complete (in fact, we know that it is a perfectoid Tate ring, by \cite[Theorem 4.4]{dine2022Topologicala}).
    Consequently, since the complete Tate ring \((R/\widetilde{J}^{\pi})[1/\pi]\) has the pair of definition \((R/\widetilde{J}^{\pi}, (\pi))\), the quotient ring $R/\widetilde{J}^{\pi}$ is classically $\pi$-adically complete. Since, by the definition of a perfectoid element, we have $pR \subseteq \pi^{p}R\subseteq \pi R$, this entails that $R/\widetilde{J}^{\pi}$ is classically $p$-adically complete by \citeSta{090T}; a fortiori, $R/\widetilde{J}^{\pi}$ is derived $p$-complete. It follows that the ideal \(\widetilde{J}^{\pi}\) is also derived \(p\)-complete.
    Hence we can apply the assertion of the lemma in the $\pi$-torsion-free case to see that the ideal $\widetilde{J}^{\pi}$ is perfectoid.
    On the other hand, since $J$ is a radical ideal, it can be written as an intersection 
    \begin{equation*}
        J=\widetilde{J}^{\pi}\cap \bigcap_{\mathfrak{q}}\mathfrak{q},
    \end{equation*}
    where $\mathfrak{q}$ ranges over the prime ideals of $R$ which contain $J$ but do not contain $\widetilde{J}^{\pi}$.
    For such a prime ideal $\mathfrak{q}$, let $x\in \widetilde{J}^{\pi}$ be an element which does not belong to $\mathfrak{q}$. Then we can choose an integer $n>0$ such that $\pi^{n}x\in J\subseteq \mathfrak{q}$.
    Since $\mathfrak{q}$ is prime, we conclude that $\pi\in \mathfrak{q}$ and therefore $p\in \mathfrak{q}$.
    But this implies that every prime ideal $\mathfrak{q}$ as above is perfectoid by \Cref{IdealsEquivCharp} below.
    It follows that $J$ is perfectoid, by \Cref{Semiperfectoid} above.
\end{proof}



In the positive characteristic case,
perfectoid ideals are more simple objects.

\begin{proposition}[{cf. \cite[Remark 2.21]{dine2022Topologicala}}] \label{IdealsEquivCharp}
    Let \(R\) be a perfectoid ring and \(J\) be an ideal of \(R\).
    Assume that \(p \in J\).
    Then \(J\) is a perfectoid ideal if and only if it is a radical ideal.

    Assume that \(R\) is a perfect(oid) ring of positive characteristic \(p\)
    and $t\in R$ is an element such that $R$ is $t$-adically complete. Assume further that \(t \notin J\).
    Then the ideal \(J[1/t]\) of the complete Tate ring $R[1/t]$ is spectrally reduced if and only if \(J[1/t]\) is a closed radical ideal of \(R[1/t]\).
\end{proposition}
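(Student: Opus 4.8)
The plan is to prove the two assertions separately, in both cases leaning on the standard fact that a perfectoid ring of characteristic $p$ is the same thing as a perfect ring of characteristic $p$ (see, e.g., \cite{bhatt2018Integral}). For the first assertion, I would begin from the remark that $p\in J$ makes $R/J$ a ring of characteristic $p$. If $J$ is a perfectoid ideal, then $R/J$ is a perfectoid ring of characteristic $p$, hence perfect, hence reduced, so $J$ is radical. For the converse, assuming $J$ radical, I would use that the Frobenius of $R/pR$ is surjective (because $R$ is perfectoid) to conclude that the Frobenius of $R/J$ is surjective as well, since $R/J$ is a quotient of $R/pR$; as $R/J$ is reduced, its Frobenius is also injective, so $R/J$ is perfect and therefore perfectoid (for instance, $(W(R/J),(p))$ is a perfect prism with quotient $R/J$, so \cite[Theorem 3.10]{bhatt2022Prismsa} applies). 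Hence $J$ is a perfectoid ideal.

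For the second assertion I would set $R_{0}\defeq R/R[t^{\infty}]$ and $A\defeq R[1/t]=R_{0}[1/t]$. Since $R$ is $t$-adically complete and $p=0$ in $R$, the element $t$ is a perfectoid element of $R$, so by \cite[2.1.3]{cesnavicius2023Purity} the ring $R_{0}$ is again perfectoid of characteristic $p$, hence perfect, $t$-torsion-free and $t$-adically complete; thus $A$ is the complete Tate ring with pair of definition $(R_{0},(t))$. Writing $\mcalJ\defeq J[1/t]$, one implication is purely formal: if $\mcalJ$ is spectrally reduced, then $\mcalJ$ is an intersection of kernels of continuous multiplicative seminorms on $A$, and each such kernel is a prime ideal (by multiplicativity) which is a closed subset (by continuity), so $\mcalJ$ is a closed radical ideal of $A$.

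For the converse, let $\mcalJ$ be a closed radical ideal of $A$ and set $\mcalJ_{0}\defeq\mcalJ\cap R_{0}$, so that $\mcalJ=\mcalJ_{0}[1/t]$. I would observe that $\mcalJ_{0}$ is a radical ideal of $R_{0}$ (contraction of a radical ideal), that it is $t$-adically closed in $R_{0}$ (the preimage of the closed set $\mcalJ$ under the continuous inclusion $R_{0}\hookrightarrow A$), and that it is $t$-saturated, $t$ being a unit of $A$. The crucial point is then that in the perfect ring $R_{0}$ every radical ideal is closed under $p$-th roots, so $S\defeq R_{0}/\mcalJ_{0}$ is perfect; moreover $S$ is $t$-torsion-free (since $\mcalJ_{0}$ is $t$-saturated) and $t$-adically complete (being the quotient of the $t$-adically complete ring $R_{0}$ by the $t$-adically closed ideal $\mcalJ_{0}$). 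Therefore $S$ is a perfectoid ring of characteristic $p$ with perfectoid element $t$, so $A/\mcalJ\cong S[1/t]$ is a perfectoid Tate ring in the sense of Fontaine, in particular uniform. Finally, the spectral seminorm of a uniform Tate ring is a norm which equals the supremum of all continuous multiplicative seminorms, so the kernels of the continuous multiplicative seminorms on $A/\mcalJ$ intersect in $(0)$; pulling these back along $A\to A/\mcalJ$ then identifies $\mcalJ$ with $\bigcap_{\ker(\phi)\supseteq\mcalJ}\ker(\phi)$, that is, $\mcalJ$ is spectrally reduced.

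I expect the main obstacle to be the last substantive step of the converse --- descending the closed radical ideal $\mcalJ$ of the Tate ring $A$ to the perfect, $t$-torsion-free, $t$-adically complete quotient $S=R_{0}/\mcalJ_{0}$ of a ring of definition. This is the one place where the characteristic-$p$ and perfectoid hypotheses are genuinely needed: $R_{0}$ must be perfect, so that the radical ideal $\mcalJ_{0}$ is automatically $p$-th-root closed, and $\mcalJ_{0}$ must be $t$-adically closed in the $t$-adically complete ring $R_{0}$, so that $S$ stays $t$-adically complete and hence perfectoid. The other ingredients --- the equivalence between perfectoid rings of characteristic $p$ and perfect rings of characteristic $p$, the perfectoidness of $R/R[\pi^{\infty}]$ for a perfectoid element $\pi$, and the fact that a uniform Tate ring has enough continuous multiplicative seminorms to separate points --- are standard.
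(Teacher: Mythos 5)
Your proof is correct, and its core is the same as the paper's: in characteristic $p$, perfectoid is the same as perfect, and a quotient $R/J$ of the perfect ring $R/pR$ is perfect precisely when $J$ is radical (your Frobenius surjective/injective argument is just an unwinding of the paper's one-line observation that $\ker(R/J\to(R/J)_{\perfd}=(R/J)_{\perf})=\sqrt{J}/J$). For the second statement the paper simply cites \cite[Remark 2.21]{dine2022Topologicala}, whereas you reproduce the underlying argument in full — passing to $R_0=R/R[t^{\infty}]$, showing $\mcalJ_0=\mcalJ\cap R_0$ is a $t$-adically closed, $t$-saturated radical ideal of the perfect ring $R_0$, so that $A/\mcalJ\cong(R_0/\mcalJ_0)[1/t]$ is a uniform perfectoid Tate ring whose continuous multiplicative seminorms separate points. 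All the steps check out; the only difference from the paper is that you spell out what the citation encapsulates.
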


\begin{proof}
    Since \(R/J\) is of positive characteristic \(p\),
    the kernel of \(R/J \to (R/J)_{\perf} = (R/J)_{\perfd}\) is equal to \(\sqrt{J}/J\).
    The last statement follows from \cite[Remark 2.21]{dine2022Topologicala}.
\end{proof}

\section{Tilting and Untilting of Ideals}

In this section, we define the tilting and untilting of ideals,
which produces a correspondence between the set of perfectoid ideals of \(R\) and the set of $p^{\flat}$-adically closed radical ideals of \(R^\flat\) for any perfectoid ring \(R\) of characteristic $0$.
To do this, we start with some constructions related to perfectoid rings.

\begin{definition} \label{TiltRing}
    For any ring \(R\),
    we can define the \textit{tilt} of this ring,
    \begin{equation*}
        R^\flat \defeq \varprojlim_F R/pR,
    \end{equation*}
    where $F$ denotes the Frobenius on $R/pR$.
    If \(R\) is \(p\)-adically complete, the underlying multiplicative monoid of \(R^\flat\) is isomorphic to \(\varprojlim_{x \mapsto x^p} R\) as monoids,
    and then \(\varprojlim_{x \mapsto x^p} R\) has a natural ring structure induced from \(R^\flat\)
    by \cite[Lemma 3.2]{bhatt2018Integral}.
    The isomorphism is defined by
    \begin{align} \label{TiltIdentification}
        R^\flat = \varprojlim_F R/pR & \longrightarrow \varprojlim_{x \mapsto x^p} R \\
        x \defeq (\overline{x^{(n)}})_n & \longmapsto (\lim_{k \to \infty} (x^{(n+k)})^{p^k})_n.
    \end{align}
    We define \(x^\sharp \defeq \lim_{k \to \infty} (x^{(k)})^{p^k} \in R\) for any \(x = (\overline{x^{(n)}})_n \in R^\flat\).

    For any ideal \(J\) of \(R\),
    we have a \textit{theta map}
    \begin{align*}
        \theta_{R/J} \colon W((R/J)^\flat) & \longrightarrow R/J \\
        \sum_{n \geq 0} [x_n] p^n & \longmapsto \sum_{n \geq 0} x_n^\sharp p^n.
    \end{align*}
    If the Frobenius map on \((R/J)/p(R/J)\) is surjective,
    then \(\theta_{R/J}\) is surjective.
\end{definition}

Next, we define the tilting and untilting of ideals as follows.

\begin{definition} \label{TiltUntiltIdeal}
    Let $R$ be a classically $p$-adically complete ring.
    We identify the underlying multiplicative monoid of the tilt $R^{\flat}$ of $R$ with $\varprojlim_{x\mapsto x^{p}}R$ via (\ref{TiltIdentification}), so an element $x\in R^{\flat}$ is a sequence $(x^{(n)})_{n}$ of elements of $R$ with 
        \begin{equation*}
            (x^{(n+1)})^p=x^{(n)} \in R
        \end{equation*}
        for all $n\in\setZ_{\geq 0}$.
        Note that under this identification we have $x^{\sharp}=x^{(0)}$ for any $x\in R^{\flat}$.
        For any subset $J$ of $R$, we define a subset $J^{\flat}$ of $R^{\flat}$ by 
        \begin{equation} \label{DefTilt}
            J^\flat \defeq \set{x = (x^{(n)})_n \in R^\flat}{x^{(n)} \in J~\text{for any}~n \geq 0}.
        \end{equation}

        If $J$ is a $p$-adically closed ideal of $R$, then $J^{\flat}$ is an ideal of $R^{\flat}$.
        If moreover $J$ is a \(p\)-adically closed radical ideal, we have $J^{\flat}=\set{x \in R^\flat}{x^\sharp \in J}$.
        We call $J^{\flat}$ the \textit{tilt} of the subset (respectively, of the $p$-adically closed ideal) $J$ of $R$.
        Remark that this operation \((-)^\flat\) maps prime ideals of \(R\) to prime ideals of \(R^\flat\) by simple observations.


    Conversely, if \(R\) is a perfectoid ring and \(I\) is an ideal of \(R^\flat\),
    we define a subset of \(W(R^\flat)\)
    \begin{equation} \label{DefWittIdeal}
        W(I) \defeq \set{\sum_{n \geq 0} [x_n] p^n \in W(R^\flat)}{x_n \in I}
    \end{equation}
    which is well-defined by the unique representation.
    In particular, if $I$ is a radical ideal of $R^{\flat}$ (so that the quotient $R^{\flat}/I$ is perfect), this \(W(I)\) is equal to \(\ker(W(R^\flat) \to W(R^\flat/I))\)
    and thus \(W(I)\) is an ideal of \(W(R^\flat)\).
    Then we can define an ideal of \(R\)
    \begin{equation} \label{DefUntilt}
        I^\sharp \defeq \theta_R(W(I))
    \end{equation}
    because of the surjectivity of \(\theta_R\).
    We call it the \textit{untilt} of \(I\).

    In the next section (\Cref{SharpPrimetoPrime}), we will show that $(-)^{\sharp}$ maps $p^{\flat}$-adically closed prime ideals of $R^{\flat}$ to prime ideals of $R$ for any perfectoid ring $R$ of characteristic $0$, but we \textit{do not} use this fact in this section.
\end{definition}

One of the important properties of these operations is the following.
Roughly speaking, the tilting and untilting operations preserve the perfectoid property of ideals under some topologically closed assumptions.

\begin{lemma} \label{TiltandUntiltCorresp}
    For any $p$-adically complete ring $R$ and any \(p\)-adically closed ideal $J$ of $R$, the tilt $J^{\flat}$ of $J$ is equal to the kernel of the homomorphism 
    \begin{equation*}
        \map{\varphi^\flat}{R^\flat}{(R/J)^\flat}
    \end{equation*}
    corresponding to the canonical quotient map $R\to R/J$.

    If $R$ is a perfectoid ring of characteristic $0$ and $J$ is a perfectoid ideal of $R$, then $\varphi^{\flat}$ induces an isomorphism $R^{\flat}/J^{\flat} \xrightarrow{\cong} (R/J)^{\flat}$.
    In particular, the tilt $J^{\flat}$ of any perfectoid ideal $J$ of a perfectoid ring $R$ is a $p^{\flat}$-adically closed radical ideal of $R^{\flat}$. 
    
    Conversely, for any perfectoid ring $R$ of characteristic \(0\) and any $p^{\flat}$-adically closed radical (and thus perfectoid) ideal $I$ of $R^{\flat}$, the untilt $I^{\sharp}$ is also a perfectoid ideal of $R$.


\end{lemma}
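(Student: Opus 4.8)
The plan is to establish the three assertions in turn.

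\emph{First assertion.} Identifying $R^{\flat}$ with $\varprojlim_{x\mapsto x^{p}}R$ and $(R/J)^{\flat}$ with $\varprojlim_{F}(R/J)/p(R/J)$, the homomorphism $\varphi^{\flat}$ sends $x=(x^{(n)})_{n}$ to the tower of residues of the $x^{(n)}$ modulo $pR+J$, so that $x\in\ker\varphi^{\flat}$ exactly when $x^{(n)}\in pR+J$ for every $n$. The only point that is not formal is to promote this to $x^{(n)}\in J$: from $x^{(n)}=(x^{(n+k)})^{p^{k}}$ and the elementary inclusion $a^{p^{k}}\in p^{k}R+J$, valid for $a\in pR+J$, one gets $x^{(n)}\in\bigcap_{k}(J+p^{k}R)$, and this intersection is $J$ because $J$ is $p$-adically closed. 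The reverse inclusion $J^{\flat}\subseteq\ker\varphi^{\flat}$ is immediate from the definition of $J^{\flat}$.

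\emph{Second assertion.} By the first part $\ker\varphi^{\flat}=J^{\flat}$, so the real content is the surjectivity of $\varphi^{\flat}$ when $J$ is a perfectoid ideal, that is, when $R/J$ is perfectoid; write $\varphi\colon R\to R/J$ for the quotient map. I would argue as follows. For the induced map of $\delta$-rings $W(\varphi^{\flat})\colon W(R^{\flat})\to W((R/J)^{\flat})$, functoriality of the theta maps gives $\theta_{R/J}\circ W(\varphi^{\flat})=\varphi\circ\theta_{R}$, which is surjective; moreover $W(\varphi^{\flat})(\xi)$ lies in $\ker\theta_{R/J}$ and is distinguished (its Witt coordinate $\varphi^{\flat}(\xi_{1})$ is a unit), so it generates $\ker\theta_{R/J}$. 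Hence the image of $W(\varphi^{\flat})$ together with the ideal $W(\varphi^{\flat})(\xi)\,W((R/J)^{\flat})$ exhausts $W((R/J)^{\flat})$; multiplying repeatedly by $W(\varphi^{\flat})(\xi)$ and using that $W((R/J)^{\flat})$ is $\xi$-adically complete (it underlies a prism and $R/J$ is $p$-adically complete) forces the image of $W(\varphi^{\flat})$ to be everything, so $\varphi^{\flat}$ is surjective. (Equivalently, one may invoke that tilting carries surjections of perfectoid rings to surjections.) Thus $\varphi^{\flat}$ induces a topological isomorphism $R^{\flat}/J^{\flat}\xrightarrow{\ \cong\ }(R/J)^{\flat}$. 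Since $(R/J)^{\flat}$ is a perfect ring of characteristic $p$, it is reduced, so $J^{\flat}$ is radical; since $(R/J)^{\flat}$ is complete for the adic topology defined by the image of $p^{\flat}$ (being the tilt of a perfectoid ring) and the isomorphism is topological, $R^{\flat}/J^{\flat}$ is $p^{\flat}$-adically complete, hence separated, so $J^{\flat}$ is $p^{\flat}$-adically closed.

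\emph{Third assertion.} Let $I$ be a $p^{\flat}$-adically closed radical ideal of $R^{\flat}$. First, $R^{\flat}/I$ is perfect (a radical quotient of a perfect ring) and $p^{\flat}$-adically complete (a closed ideal of the $p^{\flat}$-adically complete ring $R^{\flat}$), hence is itself a perfectoid ring of characteristic $p$; this is the content of the parenthetical ``(and thus perfectoid)''. Since $I$ is radical, $W(I)=\ker(W(R^{\flat})\to W(R^{\flat}/I))$, and this map is surjective (it is surjective modulo $p$ and both sides are $p$-adically complete), so $W(R^{\flat})/W(I)\cong W(R^{\flat}/I)$; combined with the surjectivity of $\theta_{R}$ and $\ker\theta_{R}=(\xi)$, this gives a canonical isomorphism $R/I^{\sharp}\cong W(R^{\flat}/I)/(\overline{\xi})$, where $\overline{\xi}$ is the image of $\xi$. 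It then suffices to check that $(W(R^{\flat}/I),(\overline{\xi}))$ is a perfect prism: it is a perfect $\delta$-ring (its reduction modulo $p$ is the perfect ring $R^{\flat}/I$); it is $(p,\overline{\xi})$-adically complete, because $W(R^{\flat}/I)$ is $p$-adically complete and its reduction modulo $p$, namely $R^{\flat}/I$, is complete for the adic topology defined by $p^{\flat}\bmod I$ exactly because $I$ is $p^{\flat}$-adically closed; and $\overline{\xi}$ is distinguished (its Witt coordinate, the image of the unit $\xi_{1}$, is a unit), hence a nonzerodivisor, being a distinguished element of a perfect $\delta$-ring. By the equivalence between perfect prisms and perfectoid rings, $R/I^{\sharp}\cong W(R^{\flat}/I)/(\overline{\xi})$ is perfectoid, i.e.\ $I^{\sharp}$ is a perfectoid ideal of $R$.

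The steps I expect to be the main obstacles are the surjectivity of $\varphi^{\flat}$ in the second assertion (the completeness bookkeeping in the successive-approximation step) and, in the third assertion, verifying that $(W(R^{\flat}/I),(\overline{\xi}))$ is genuinely a prism — in particular that $\overline{\xi}$ is a nonzerodivisor, for which I rely on the standard fact that distinguished elements of perfect $\delta$-rings are nonzerodivisors.
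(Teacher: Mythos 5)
Your proof is correct and takes essentially the same route as the paper: for the first part you use $\bigcap_k(J+p^kR)=J$ just as in the argument the paper defers to its cited reference; for the second part you establish surjectivity of $\varphi^\flat$ by a completeness argument (the paper applies topological Nakayama's lemma directly to $\varphi^\flat$, whereas you pass through $W(-)$ and the $\theta$-maps and run a successive-approximation argument — equivalent but a touch more indirect); and for the third part you arrive at the same identification $R/I^{\sharp}\cong W(R^{\flat}/I)/(\overline{\xi})$ and then verify the perfect-prism axioms by hand rather than citing \cite{cesnavicius2023Purity} or \cite{dospinescu2023Conjecture}.
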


\begin{proof}
    The equality $J^{\flat}=\ker(\varphi^{\flat})$ follows by the same argument as in the proof of \cite[Lemma 4.8]{dine2022Topologicala}.
    If $R$ and $R/J$ are perfectoid, their tilts are $p^{\flat}$-adically complete (see, for example, \cite[2.1.2]{cesnavicius2023Purity}).
    Then the kernel \(J^\flat\) is \(p^\flat\)-adically closed.
    By topological Nakayama's lemma,
    \(R^\flat \to (R/J)^\flat\) is surjective and this proves that \(\varphi^\flat\) induces an isomorphism \(R^\flat/J^\flat \cong (R/J)^\flat\) and \(J^\flat\) is a $p^{\flat}$-adically closed radical ideal of \(R^\flat\).

    For a \(p^\flat\)-adically closed radical ideal \(I \subseteq R^\flat\),
    the canonical map \(W(R^\flat) \to W(R^\flat/I)\) is surjective by topological Nakayama's lemma
    and the kernel is given by \(W(I)\) by definition.
    Then we have
    \begin{equation*}
        R/I^\sharp \cong W(R^\flat)/((\xi) + W(I)) \cong W(R^\flat/I)/(\xi).
    \end{equation*}
    Since \(I\) is a \(p^\flat\)-adically closed radical ideal of \(R^\flat\),
    the quotient \(R^\flat/I\) is a \(p^\flat\)-adically complete\footnote{This \(p^\flat\)-adic completeness is necessary because we want the pair \((W(R^\flat/I), (\xi))\) to be a (perfect) prism, see also \Cref{UntiltandClosedness}.} perfect(oid) \(R^\flat\)-algebra.
    Then \(R/I^\sharp\) is a perfectoid \(R\)-algebra by \cite[Proposition 2.1.9]{cesnavicius2023Purity} or \cite[Th\`eor\'eme 2.10]{dospinescu2023Conjecture}.
\end{proof}

\begin{remark}\label{UntiltandClosedness}
    The assumption that the ideal $I$ in \Cref{TiltandUntiltCorresp} be $p^{\flat}$-adically closed is not automatic: Indeed, we show in \Cref{CounterexamplePerfdIdeal} that there exists a perfectoid ring $R$ of characteristic $0$ such that not all prime ideals of $R^{\flat}$ are $p^{\flat}$-adically closed. Furthermore, the assumption that $I$ be $p^{\flat}$-adically closed is also necessary for the assertion of \Cref{TiltandUntiltCorresp} to hold true: If $I$ is a radical ideal of $R^{\flat}$ which is not $p^{\flat}$-adically closed in $R^{\flat}$, then $I^{\sharp}$ is not a perfectoid ideal of $R$. Indeed, if $I^{\sharp}$ is a perfectoid ideal, i.e., $R/I^{\sharp}\cong W(R^{\flat}/I)/(\xi)$ is a perfectoid ring, then, by \cite[Theorem 3.10]{bhatt2022Prismsa}, the pair $(W(R^{\flat}/I), (\xi))$ is a perfect prism. By loc.~cit., Lemma 3.8(2), this entails that $W(R^\flat/I)$ is classically $(p, \xi)$-adically complete. But then $R^{\flat}/I=W(R^{\flat}/I)/(p)$ is classically $\xi_0=p^{\flat}$-adically complete and thus $I$ must be $p^{\flat}$-adically closed.
\end{remark}

Using the above observations, we can show the following one-to-one correspondence between the sets of perfectoid ideals and \(p^\flat\)-adically closed radical ideals.

\begin{theorem} \label{CorrespPerfdIdeals}
    Let $R$ be a perfectoid ring of characteristic $0$.
    Then the tilting and untilting operations $(-)^{\flat}$ and $(-)^\sharp$ define a one-to-one correspondence between the set of perfectoid ideals of $R$ and the set of $p^{\flat}$-adically closed radical ideals of $R^{\flat}$.
\end{theorem}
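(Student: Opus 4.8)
The plan is to show that the two maps $(-)^{\flat}$ and $(-)^{\sharp}$ are mutually inverse bijections between the two sets in question. By \Cref{TiltandUntiltCorresp}, we already know that $(-)^{\flat}$ sends perfectoid ideals of $R$ to $p^{\flat}$-adically closed radical ideals of $R^{\flat}$, and that $(-)^{\sharp}$ sends $p^{\flat}$-adically closed radical ideals of $R^{\flat}$ back to perfectoid ideals of $R$. So the only thing left to verify is that the two composites $(-)^{\flat}\circ(-)^{\sharp}$ and $(-)^{\sharp}\circ(-)^{\flat}$ are the identity maps on their respective domains.

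First I would treat the composite starting from a perfectoid ideal $J$ of $R$, i.e. prove $(J^{\flat})^{\sharp}=J$. By \Cref{TiltandUntiltCorresp}, $\varphi^{\flat}$ induces an isomorphism $R^{\flat}/J^{\flat}\xrightarrow{\cong}(R/J)^{\flat}$, and this isomorphism is compatible with the theta maps: the square relating $\theta_R\colon W(R^{\flat})\to R$ and $\theta_{R/J}\colon W((R/J)^{\flat})\to R/J$ commutes (both are the restriction of the prism structure maps, by functoriality of $W(-)$ and of $\theta$). Concretely, $W(J^{\flat})=\ker\bigl(W(R^{\flat})\to W((R/J)^{\flat})\bigr)$ because $J^{\flat}$ is a $p^{\flat}$-adically closed radical ideal (this is exactly the identification used in the proof of \Cref{TiltandUntiltCorresp}), and then
\begin{equation*}
    (J^{\flat})^{\sharp}=\theta_R(W(J^{\flat}))=\ker\bigl(R\xrightarrow{\theta_{R/J}\circ W(\varphi^{\flat})}R/J\bigr)=\ker(R\to R/J)=J,
\end{equation*}
where the middle equality uses that $\theta_{R/J}$ is surjective (so the kernel of $R\to R/J$ computed through the Witt vectors is exactly the image of $W(J^{\flat})=W(\ker\varphi^{\flat})$ under $\theta_R$). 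I would spell out the commutativity of the relevant diagram carefully, since this is where the functoriality of the prism/perfectoidization correspondence from \cite{bhatt2022Prismsa} enters.

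For the other composite, starting from a $p^{\flat}$-adically closed radical ideal $I$ of $R^{\flat}$, I must show $(I^{\sharp})^{\flat}=I$. By \Cref{TiltandUntiltCorresp}, $R/I^{\sharp}$ is a perfectoid ring with $R/I^{\sharp}\cong W(R^{\flat}/I)/(\xi)$; since $R^{\flat}/I$ is a $p^{\flat}$-adically complete perfect ring, the perfect prism $(W(R^{\flat}/I),(\xi))$ corresponds to the perfectoid ring $R/I^{\sharp}$, and hence by the tilting equivalence the tilt of $R/I^{\sharp}$ is canonically $R^{\flat}/I$. Chasing this identification against the one coming from $I^{\sharp}$ being a perfectoid ideal — namely $(R/I^{\sharp})^{\flat}\cong R^{\flat}/(I^{\sharp})^{\flat}$ from the first part of \Cref{TiltandUntiltCorresp} — gives $R^{\flat}/(I^{\sharp})^{\flat}\cong R^{\flat}/I$ as quotients of $R^{\flat}$, and tracing the maps shows the two quotient maps agree, so $(I^{\sharp})^{\flat}=I$. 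The main obstacle I anticipate is precisely the bookkeeping here: making sure the isomorphism $R^{\flat}/(I^{\sharp})^{\flat}\cong R^{\flat}/I$ is literally induced by the identity on $R^{\flat}$ (equivalently, that the canonical surjection $R^{\flat}\to(R/I^{\sharp})^{\flat}$ has kernel exactly $I$ and not merely an ideal abstractly isomorphic to the situation), which amounts to checking that all the identifications in \Cref{TiltRing}, \Cref{TiltUntiltIdeal} and \Cref{TiltandUntiltCorresp} are compatible with the canonical maps $R\to R/I^{\sharp}$ and $R^{\flat}\to R^{\flat}/I$. Inclusion-preservation of both maps is immediate from the definitions \eqref{DefTilt} and \eqref{DefUntilt}, so that part needs only a sentence. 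Finally I would remark that, combined with \Cref{TiltandUntiltCorresp}, this also reproves the equivalence "$R$ is a domain $\iff$ $R^{\flat}$ is a domain" in the characteristic $0$ perfectoid setting, anticipating the prime-ideal statement of the next section.
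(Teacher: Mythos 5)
Your overall strategy is the same as the paper's: show that $(-)^\flat$ and $(-)^\sharp$ are mutually inverse using the commutative square linking $\theta_R$ and $\theta_{R/J}$ for the direction $J^{\flat\sharp}=J$, and the chain of canonical isomorphisms $R^\flat/I^{\sharp\flat}\cong (R/I^\sharp)^\flat\cong (W(R^\flat/I)/(\xi))^\flat\cong R^\flat/I$ for the direction $I^{\sharp\flat}=I$.

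One point where your justification is off: you claim the identity $\theta_R(W(J^\flat))=J$ follows from surjectivity of $\theta_{R/J}$, but that alone only gives the inclusion $\theta_R(W(J^\flat))\subseteq J$ (via commutativity). For the reverse inclusion you must produce, for each $j\in J$, an element of $W(J^\flat)$ mapping to $j$ under $\theta_R$; this requires knowing that the image of $\ker\theta_R=(\xi)$ under $W(\varphi^\flat)$ is all of $\ker\theta_{R/J}$, which is exactly what the paper flags as the key ingredient: $\ker\theta_R$ and $\ker\theta_{R/J}$ are generated by the same $\xi$ (i.e., by $\xi$ and its image). Without this compatibility of the distinguished generators, a general commutative square of surjections need not have matching kernels on the two vertical arrows. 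You do acknowledge needing to ``spell out the commutativity of the relevant diagram carefully,'' so this is a missing precision rather than a wrong route, but it is precisely the non-formal content of this half of the proof. (Also, as written, $\theta_{R/J}\circ W(\varphi^\flat)$ has domain $W(R^\flat)$, not $R$, so the middle term of your displayed equality chain is ill-typed; you presumably mean the induced map $R\to R/J$ after quotienting out $\xi$.)
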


\begin{proof}
    By \Cref{TiltandUntiltCorresp}, it suffices to show that the maps \((-)^\flat\) and \((-)^\sharp\) are inverse to each other.
    Take any perfectoid ideal \(J\) of \(R\).
    The perfectoid \(R\)-algebra \(R/J\) is the form \(W((R/J)^\flat)/\xi W((R/J)^\flat)\) by \cite[Proposition 2.1.9]{cesnavicius2023Purity} or \cite[Th\'eor\`eme 2.10]{dospinescu2023Conjecture}.
    Then the commutative diagram
    \begin{center}
        \begin{tikzcd}
            W(R^\flat)/W(J^\flat) \arrow[r, "\cong"]  & W((R/J)^\flat) \arrow[dd, "\theta_{R/J}", two heads] \\
            W(R^\flat) \arrow[d, "\theta_R", two heads] \arrow[u, two heads] &  \\
            R \arrow[r, two heads] & R/J
        \end{tikzcd}
    \end{center}
    and the fact that \(\ker(\theta_R)\) and \(\ker(\theta_{R/J})\) are generated by the same \(\xi\)
    show that \(J^{\flat \sharp} = J\).

    Conversely, if $I$ is a $p^{\flat}$-adically closed radical ideal of $R^{\flat}$, we have
    \begin{equation*}
        R^\flat/I^{\sharp \flat} \cong (R/I^\sharp)^\flat \cong (W(R^\flat/I)/(\xi))^\flat \cong R^\flat/I
    \end{equation*}
    canonically and thus \(I=I^{\sharp \flat}\).
\end{proof}
We deduce from the above theorem a corollary which ensures that the theorem also stays true when the role of $p$ is taken on by another perfectoid element of $R$. Recall that for any perfectoid element $\pi$ in a perfectoid ring $R$ of characteristic $0$, \cite[Lemma 3.9]{bhatt2018Integral} entails that there exists a unit $u\in R^{\times}$ such that $u\pi$ admits a compatible system of $p$-power roots in $R$, i.e., there exists an element $\pi^{\flat}\in R^{\flat}$ such that $\pi^{\flat\sharp}$ is a unit multiple of $\pi$ in $R$.

\begin{corollary}\label{Perfectoid elements and untilting}
    Let $R$ be a perfectoid ring of characteristic $0$, let $\pi\in R$ be a perfectoid element and let $\pi^{\flat}\in R^{\flat}$ be an element such that $\pi^{\flat\sharp}$ is a unit multiple of $\pi$ in $R$. Then a radical ideal $I$ of $R^{\flat}$ is $p^{\flat}=\xi_0$-adically closed if and only if it is $\pi^{\flat}$-adically closed.
\end{corollary}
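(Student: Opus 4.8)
My plan is to reduce the corollary to \Cref{CorrespPerfdIdeals} by running the whole of \Cref{TiltUntiltIdeal}--\Cref{CorrespPerfdIdeals} with $\pi^{\flat}$ playing the role of $p^{\flat}$, and then comparing the two statements. The key observation is that the operations $(-)^{\flat}$ and $(-)^{\sharp}=\theta_R(W(-))$ do not depend on the choice of distinguished element $\xi$, while the proofs of the preceding results use $\xi$ only through its zeroth Witt coordinate $\xi_0=p^{\flat}$. So first I would record that $\pi^{\flat\sharp}$ (a unit multiple of $\pi$) is again a perfectoid element of $R$ — its $p$-th power still divides $p$, and $R$ is still complete for its adic topology — and that, by the argument of \cite[2.1.2]{cesnavicius2023Purity}, one may choose a distinguished generator $\xi_\pi$ of $\ker(\theta_R)$ with $(\xi_\pi)_0=\pi^{\flat}$, so that $(W(R^{\flat}),(\xi_\pi))$ is again the perfect prism of $R$.

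Next I would re-read the proofs of \Cref{TiltandUntiltCorresp}, \Cref{UntiltandClosedness} and \Cref{CorrespPerfdIdeals}, replacing $\xi$ by $\xi_\pi$, $\xi_0=p^{\flat}$ by $\pi^{\flat}$, and $vp$ by $\pi^{\flat\sharp}$ throughout. The only steps that are not purely formal are those asserting that a tilt is complete; here one uses the general fact from the tilting dictionary for integral perfectoid rings (\cite{bhatt2018Integral}) that, for any perfectoid ring $S$ and any perfectoid element $\varpi$ of $S$, the tilt $S^{\flat}$ is $\varpi^{\flat}$-adically complete and $S^{\flat}/\varpi^{\flat}S^{\flat}\cong S/\varpi S$ — a statement valid for an arbitrary perfectoid element, not only the one furnished by \cite[Lemma 3.9]{bhatt2018Integral}. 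This yields that $(-)^{\flat}$ and $(-)^{\sharp}$ also restrict to an inclusion-preserving bijection between the perfectoid ideals of $R$ and the $\pi^{\flat}$-adically closed radical ideals of $R^{\flat}$. Since $(-)^{\flat}$ is the same map in both cases, the set of $\pi^{\flat}$-adically closed radical ideals of $R^{\flat}$ and the set of $p^{\flat}$-adically closed radical ideals of $R^{\flat}$ both coincide with $\{J^{\flat}:J\text{ a perfectoid ideal of }R\}$, and hence with each other — which is precisely the corollary.

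I expect the only real work to be this bookkeeping: confirming that $\xi_\pi$ can be produced and that the $\pi^{\flat}$-adic completeness statements used in \Cref{TiltandUntiltCorresp}/\Cref{UntiltandClosedness} are available for an arbitrary perfectoid element. For orientation, one inclusion is elementary and direction-independent: from $R^{\flat}/\pi^{\flat}R^{\flat}\cong R/\pi^{\flat\sharp}R$ and $p\in(\pi^p)\subseteq(\pi^{\flat\sharp})R$ one gets $p^{\flat}\in(\pi^{\flat})R^{\flat}$, hence $(p^{\flat})^nR^{\flat}\subseteq(\pi^{\flat})^nR^{\flat}$ for all $n$, so every $\pi^{\flat}$-adically closed ideal (radical or not) is $p^{\flat}$-adically closed; it is the reverse implication that genuinely exploits the radicality of $I$, via $I=(I^{\sharp})^{\flat}$ with $I^{\sharp}$ a perfectoid ideal of $R$ and $R^{\flat}/I\cong(R/I^{\sharp})^{\flat}$ the tilt of the perfectoid ring $R/I^{\sharp}$ with respect to the perfectoid element $\pi^{\flat\sharp}\bmod I^{\sharp}$ (and when $\pi^{\flat\sharp}\in I^{\sharp}$, i.e.\ $R/I^{\sharp}$ has characteristic $p$, the element $\pi^{\flat}$ already lies in $I$ and the claim is trivial).
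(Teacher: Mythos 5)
Your proposal is correct, and in the end it contains the paper's own proof as a special case: the closing paragraph's \emph{direct} argument (for $I$ $p^{\flat}$-adically closed radical, write $I=(I^{\sharp})^{\flat}$ with $I^{\sharp}$ a perfectoid ideal, identify $R^{\flat}/I\cong(R/I^{\sharp})^{\flat}$, and invoke $\pi^{\flat}$-adic completeness of the tilt of a perfectoid ring; the reverse inclusion follows from $p^{\flat}R^{\flat}\subseteq\pi^{\flat}R^{\flat}$) is precisely what the paper does, just phrased with $J=I^{\sharp}$ in place of $I=J^{\flat}$. Your \emph{main} plan of re-running \Cref{TiltUntiltIdeal}--\Cref{CorrespPerfdIdeals} with a second distinguished generator $\xi_{\pi}$ (with $(\xi_{\pi})_{0}=\pi^{\flat}$) is also valid, since the maps $(-)^{\flat}$ and $(-)^{\sharp}=\theta_{R}(W(-))$ are defined independently of the choice of generator of $\ker\theta_{R}$, and since $\xi$ and $\xi_{\pi}$ differ by a unit in $W(R^{\flat})$ the prism $(W(R^{\flat}/I),\ker\theta)$ is the same in both runs; that route just trades a one-line application of the already-proved \Cref{CorrespPerfdIdeals} for a re-examination of its proof, which buys nothing. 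Minor bookkeeping to flag if you pursue the longer route: you do need to confirm that $\pi^{\flat\sharp}$ admits a compatible system of $p$-power roots (automatic, as $(\pi^{\flat(n)})_{n}$ supplies one) so that a distinguished $\xi_{\pi}$ with $(\xi_{\pi})_{0}=\pi^{\flat}$ actually exists, and that the $\pi^{\flat}$-adic completeness of $(R/J)^{\flat}$ you invoke from \cite[2.1.2]{cesnavicius2023Purity} holds for every perfectoid element, not just the one chosen in \Cref{Notation} — both of which you correctly anticipate.
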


\begin{proof}
    If $I$ is a $p^{\flat}$-adically closed radical ideal of $R^{\flat}$, then, by \Cref{CorrespPerfdIdeals}, $I=J^{\flat}$ for some perfectoid ideal $J$ of $R$. By \Cref{TiltandUntiltCorresp}, $R^{\flat}/I=(R/J)^{\flat}$. By \cite[2.1.2]{cesnavicius2023Purity}, $(R/J)^{\flat}$ is $\pi^{\flat}$-adically complete. Consequently, $I$ is $\pi^{\flat}$-adically closed.

Conversely, once again by the discussion in \cite[2.1.2]{cesnavicius2023Purity}, $(\pi^{\flat})^{p}$ divides $p^{\flat}$ in $R^{\flat}$ and thus $p^{\flat} R^{\flat} \subseteq (\pi^{\flat})^p R^{\flat} \subseteq \pi^{\flat} R^{\flat}$. This entails that every $\pi^{\flat}$-adically complete $R^{\flat}$-algebra is also $p^{\flat}$-adically complete, by \citeSta{090T}. Hence every $\pi^{\flat}$-adically closed ideal $I$ of $R^{\flat}$ is also $p^{\flat}$-adically closed, as claimed.
\end{proof}

\section{Correspondence of Prime Ideals} \label{SectionCorrespPrimeIdeals}

For any perfectoid ring $R$, the tilt of a $p$-adically closed prime ideal of $R$ is a prime ideal of $R^{\flat}$. This can be shown easily. However, the stability of (closed) prime ideals under the untilting operation $(-)^\sharp$ is not quite obvious.
We show this fact by using an idea that appeared in the proof of \cite[Theorem 4.16]{dine2022Topologicala}.



\begin{definition}[cf. {\cite[Definition 3.13]{dine2022Topologicala}}]
    Let $A$ be a Tate ring and let $\mcalJ$ be an ideal of $A$ which is not dense in $A$.
    The \textit{spectral radical} $\mcalJ_{\sprad}$ of $\mcalJ$ is the smallest spectrally reduced ideal of $A$ containing $\mcalJ$.
    That is, 
    \begin{equation*}
        \mcalJ_{\sprad}=\bigcap_{\ker(\phi)\supseteq \mcalJ}\ker(\phi),
    \end{equation*}
    where $\phi$ ranges over the continuous multiplicative seminorms $\map{\phi}{A}{\setR_{\geq 0}}$ with the property that $\ker(\phi)\supseteq \mcalJ$.
\end{definition}

\begin{remark}
    Since any Tate ring can be equipped with a seminorm defining its topology (see \Cref{Notation} or \cite[Lemma 2.27]{nakazato2022Finite}), every non-dense ideal is contained in some ideal of the form $\ker(\phi)$ for a continuous multiplicative seminorm:
    This is a consequence of the fact that the Berkovich spectrum of any seminormed ring is non-empty (see \cite[Lemma 3.12]{dine2022Topologicala}).
    This shows that the above notion of the spectral radical is well-defined.
    Furthermore, the spectral radical $\mcalJ_{\sprad}$ in the sense of the above definition agrees with the spectral radical of the seminormed ring $(A, \norm{\cdot})$ if $\norm{\cdot}$ is some (or, equivalently, any) seminorm defining the topology of the Tate ring $A$ and admitting a multiplicative topologically nilpotent unit $t$ (see \cite[Remark 2.19]{dine2022Topologicala} or \Cref{RemarkSpecRed}).
    For example, we could let $\norm{\cdot}$ be the seminorm defined by some pair of definition $(A_{0}, (t))$ of $A$.
\end{remark} 

The following lemma shows the relationship between \((-)_{\perfd}\) and \((-)_{\sprad}\).

\begin{lemma} \label{PerfdIsSpecrad}
    Let \(R\) be a perfectoid ring of characteristic \(0\) and \(J\) be a derived \(p\)-complete ideal of \(R\).
    Then the \(p\)-saturation \(\widetilde{J_{\perfd}}\) of \(J_{\perfd}\) in \(R\) is equal to the inverse image of the spectral radical \((J[1/p])_{\sprad}\) of \(J[1/p]\) in \(\overline{R}[1/p]\) via the canonical map \(R \to \overline{R}[1/p]\).
\end{lemma}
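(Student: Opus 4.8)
The plan is to realise both ideals in the statement as the smallest \(p\)-saturated perfectoid ideal of \(R\) containing \(J\), using \Cref{CorrespSpecRed} as a black box. Fix the perfectoid element \(\varpi \defeq (p^{\flat})^{(1)} \in R\), so that \(\varpi^{p} = p^{\flat\sharp}\) is a unit multiple of \(p\); then \(A \defeq \overline{R}[1/p] = R[1/\varpi]\) is a perfectoid Tate ring with pair of definition \((\overline{R}, (\varpi))\), and for every ideal \(\mathfrak{a} \subseteq R\) the \(p\)-saturation \(\widetilde{\mathfrak{a}}\) coincides with the \(\varpi\)-saturation and is the inverse image of \(\mathfrak{a}A\) under \(R \to A\) (\Cref{pSaturation}). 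Thus \(\widetilde{J_{\perfd}}\) is the inverse image of \(J_{\perfd}A\), while the ideal in the statement is the inverse image of \((J[1/p])_{\sprad}\); writing \(\mathcal{K}\) for the latter, it suffices to show \(\widetilde{J_{\perfd}} = \mathcal{K}\). First I would note that since \(J\) is derived \(p\)-complete, \(R/J\) is semiperfectoid, so \(R \to (R/J)_{\perfd}\) is surjective with kernel \(J_{\perfd}\) by \cite[Theorem 7.4]{bhatt2022Prismsa}; hence \(R/J_{\perfd} \cong (R/J)_{\perfd}\) is perfectoid, \(R/\widetilde{J_{\perfd}} = (R/J_{\perfd})/(R/J_{\perfd})[\varpi^{\infty}]\) is perfectoid by \cite[2.1.3]{cesnavicius2023Purity}, and consequently \(A/J_{\perfd}A = (R/\widetilde{J_{\perfd}})[1/\varpi]\) is a perfectoid Tate ring; in particular \(J_{\perfd}A\) is a closed ideal of \(A\), and \(\widetilde{J_{\perfd}}\) is a \(\varpi\)-torsion-free, \(p\)-saturated perfectoid ideal of \(R\) containing \(J\). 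If \(J[1/p]\) is dense in \(A\) (the case where \((J[1/p])_{\sprad}\) is not literally defined), then the closed ideal \(J_{\perfd}A\) contains \(\overline{J[1/p]} = A\), so \(J_{\perfd}A = A\), \(\widetilde{J_{\perfd}} = R\), and \(\mathcal{K} = R\) as well; so we may assume \(J[1/p]\) is non-dense in \(A\), whence \((J[1/p])_{\sprad}\) is a proper ideal of \(A\) and \(\mathcal{K} \neq R\).

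Next I would check that \(\mathcal{K}\) is a perfectoid ideal. Since \((J[1/p])_{\sprad}\) is spectrally reduced, it is a closed ideal of \(A\), so the complete Tate ring \(A/(J[1/p])_{\sprad}\) has pair of definition \((R/\mathcal{K}, (\varpi))\); hence \(R/\mathcal{K}\) is classically \(\varpi\)-adically complete and, since \(pR = \varpi^{p}R \subseteq \varpi R\), classically \(p\)-adically complete by \citeSta{090T}, in particular derived \(p\)-complete. So \(R/\mathcal{K}\) is a \(\varpi\)-torsion-free semiperfectoid ring with \(\varpi \notin \mathcal{K}\) (as \(\mathcal{K} \neq R\) is \(\varpi\)-saturated) and \(\mathcal{K}[1/\varpi] = (J[1/p])_{\sprad}\) spectrally reduced, and the torsion-free case of the converse in \Cref{CorrespSpecRed} shows that \(\mathcal{K}\) is a perfectoid ideal of \(R\).

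It remains to prove the two inclusions. For \(\widetilde{J_{\perfd}} \subseteq \mathcal{K}\): since \(\mathcal{K}\) is a perfectoid ideal containing \(J\), the ring \(R/\mathcal{K}\) is a perfectoid \((R/J)\)-algebra, so the universal property of the perfectoidization gives a factorisation \(R \to (R/J)_{\perfd} \to R/\mathcal{K}\) of the canonical surjection; comparing kernels yields \(J_{\perfd} \subseteq \mathcal{K}\), and since \(\mathcal{K}\) is \(p\)-saturated this gives \(\widetilde{J_{\perfd}} \subseteq \mathcal{K}\). In particular \(\widetilde{J_{\perfd}} \neq R\), i.e. \(\varpi \notin \widetilde{J_{\perfd}}\), so the first assertion of \Cref{CorrespSpecRed} applied to the perfectoid ideal \(\widetilde{J_{\perfd}}\) shows that \(\widetilde{J_{\perfd}}[1/\varpi] = J_{\perfd}A\) is spectrally reduced; being a spectrally reduced ideal of \(A\) containing \(J[1/p]\), it contains the smallest such ideal \((J[1/p])_{\sprad} = \mathcal{K}[1/\varpi]\), and taking inverse images along \(R \to A\) gives \(\mathcal{K} \subseteq \widetilde{J_{\perfd}}\). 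Hence \(\widetilde{J_{\perfd}} = \mathcal{K}\).

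The argument is essentially bookkeeping with saturations once \Cref{CorrespSpecRed} is granted; its genuine content has been absorbed into that lemma, whose proof ultimately rests on the (analytic) identification of the localized perfectoidization \((R/J)_{\perfd}[1/\varpi]\) with the uniform completion of \((R/J)[1/\varpi]\), equivalently on a theorem of Bhatt and Scholze. I expect the only points requiring a little care to be the verification that \(\mathcal{K}\) is derived \(p\)-complete (handled above via closedness of spectrally reduced ideals) and the bookkeeping around the degenerate case where \(J[1/p]\) is dense in \(\overline{R}[1/p]\).
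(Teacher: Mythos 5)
Your argument is correct, but it takes a genuinely different route from the paper's. The paper's proof is a three-line computation: since $J$ is derived $p$-complete it invokes \cite[Theorem 5.2]{ishizuka2023Calculation} to identify $\overline{(R/J)_{\perfd}}[1/p]$ with the uniform completion $\widehat{((R/J)[1/p])^{u}}$, and then observes that the kernel of $\overline{R}[1/p]\to\widehat{((R/J)[1/p])^{u}}$ is, by the very definitions of the spectral radical and of the uniform completion (\cite[Remark 5.7]{ishizuka2023Mixed}), precisely $(J[1/p])_{\sprad}$. You instead avoid citing the external computation and bootstrap entirely from \Cref{CorrespSpecRed}: you realize both $\widetilde{J_{\perfd}}$ and the preimage $\mathcal{K}$ of the spectral radical as $\varpi$-saturated, $\varpi$-torsion-free perfectoid ideals containing $J$, and then obtain the two inclusions by playing the universal property of perfectoidization (for $\widetilde{J_{\perfd}}\subseteq\mathcal{K}$) against the minimality of the spectral radical among spectrally reduced ideals containing $J[1/p]$ (for $\mathcal{K}\subseteq\widetilde{J_{\perfd}}$). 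Both approaches ultimately rest on the same analytic input --- that the localized perfectoidization agrees with the uniform completion, which powers the proof of \Cref{CorrespSpecRed} via \cite[Theorem 4.4]{dine2022Topologicala} just as it powers \cite[Theorem 5.2]{ishizuka2023Calculation} --- so neither is more elementary in substance. Your version is noticeably longer but more self-contained within the paper, whereas the paper compresses the saturation bookkeeping by outsourcing it to an external reference. The details you flag as delicate are all handled correctly: derived $p$-completeness of $\mathcal{K}$ via closedness of the spectral radical and Stacks 090T, the torsion-free hypothesis of \Cref{CorrespSpecRed} via $\varpi$-saturation, and the degenerate case where $J[1/p]$ is dense in $\overline{R}[1/p]$.
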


\begin{proof}
    Since \(J\) is derived \(p\)-complete, we can apply \cite[Theorem 5.2]{ishizuka2023Calculation} for \(R/J\).
    Then we have
    \begin{equation*}
        J_{\perfd}[1/p] = \ker(\overline{R}[1/p] \to \overline{(R/J)_{\perfd}}[1/p]) = \ker(\overline{R}[1/p] \to \widehat{((R/J)[1/p])^u})
    \end{equation*}
    and the last term is equal to \((J[1/p])_{\sprad}\)
    because of the definition of the spectral radical and of the uniform completion \cite[Remark 5.7]{ishizuka2023Mixed}.
\end{proof}

\begin{lemma} \label{IntersectionpSatPerfdIdeal}
    Let \(I\) and \(J\) be derived \(p\)-complete ideals of a perfectoid ring \(R\) of characteristic \(0\).
    Then we have
    \begin{equation*}
        \widetilde{(I \cap J)_{\perfd}} = \widetilde{I_{\perfd}} \cap \widetilde{J_{\perfd}}.
    \end{equation*}
\end{lemma}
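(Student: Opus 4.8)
The plan is to reduce the identity to the analogous statement for spectral radicals in the perfectoid Tate ring $\overline{R}[1/p]$ by means of \Cref{PerfdIsSpecrad}, and then to deduce that statement from the fact that the kernel of a continuous multiplicative seminorm is a prime ideal.

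First I would note that $I \cap J$ is again a derived $p$-complete ideal of $R$: it is the kernel of the composite $I \hookrightarrow R \to R/J$ of maps between derived $p$-complete $R$-modules, and derived $p$-complete modules are closed under kernels of such maps (a standard fact about derived complete modules). Hence \Cref{PerfdIsSpecrad} applies to $I$, to $J$, and to $I \cap J$. Writing $g \colon R \to \overline{R}[1/p]$ for the canonical map and, for an ideal $K$ of $R$, $K[1/p]$ for the ideal of $\overline{R}[1/p]$ generated by the image of $K$, \Cref{PerfdIsSpecrad} gives
\begin{equation*}
    \widetilde{K_{\perfd}} = g^{-1}\bigl((K[1/p])_{\sprad}\bigr) \qquad\text{for } K \in \{I,\, J,\, I \cap J\}.
\end{equation*}
Taking inverse images commutes with intersections, and localization is exact, so that $(I \cap J)[1/p] = I[1/p] \cap J[1/p]$ inside $\overline{R}[1/p]$; therefore the claimed equality $\widetilde{(I \cap J)_{\perfd}} = \widetilde{I_{\perfd}} \cap \widetilde{J_{\perfd}}$ reduces to the equality $(\mathcal{I} \cap \mathcal{J})_{\sprad} = \mathcal{I}_{\sprad} \cap \mathcal{J}_{\sprad}$ for the ideals $\mathcal{I} = I[1/p]$ and $\mathcal{J} = J[1/p]$ of the Tate ring $\overline{R}[1/p]$.

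Finally, I would prove this equality for arbitrary ideals $\mathcal{I}, \mathcal{J}$ of any Tate ring $A$. The inclusion $(\mathcal{I} \cap \mathcal{J})_{\sprad} \subseteq \mathcal{I}_{\sprad} \cap \mathcal{J}_{\sprad}$ is immediate from the monotonicity of $(-)_{\sprad}$. For the reverse inclusion, recall that the kernel of any nonzero continuous multiplicative seminorm on $A$ is a prime ideal; such a kernel therefore contains $\mathcal{I} \cap \mathcal{J}$ if and only if it contains $\mathcal{I}$ or it contains $\mathcal{J}$. Consequently the family of seminorm kernels containing $\mathcal{I} \cap \mathcal{J}$ is the union of the family of those containing $\mathcal{I}$ and the family of those containing $\mathcal{J}$, and intersecting over this union --- using the defining formula for the spectral radical --- yields the desired equality. (The same argument also covers the degenerate case of a dense ideal, whose spectral radical is then the unit ideal, since no nonzero continuous multiplicative seminorm vanishes on a dense ideal.) The main point, and the only step that requires care, is the translation of the statement into the language of Tate rings carried out in the previous paragraph, which rests entirely on \Cref{PerfdIsSpecrad} and the exactness of localization; once this is done, the spectral-radical identity --- and hence the lemma --- follows formally from the primeness of seminorm kernels.
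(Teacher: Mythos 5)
Your proof is correct and follows the same route as the paper's: reduce to derived $p$-completeness of $I\cap J$ (via a kernel argument), invoke \Cref{PerfdIsSpecrad} for all three ideals, and use that $(-)_{\sprad}$ commutes with finite intersections. The only difference is cosmetic: the paper presents $I\cap J$ as the kernel of $R\to (R/I)\times(R/J)$ and simply cites the intersection-commutation of $(-)_{\sprad}$ as known, whereas you supply the short deduction from the primeness of kernels of continuous multiplicative seminorms --- a welcome addition, since that fact is stated but not proved in the reference.
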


\begin{proof}
    The intersection \(I \cap J\) is derived \(p\)-complete since it is the kernel of the map of derived \(p\)-complete rings \(R \to (R/I) \times (R/J)\).
    Therefore, the assertion follows from \Cref{PerfdIsSpecrad} and the fact that the map \((-)_{\sprad}\) on ideals of a perfectoid Tate ring commutes with intersections.
\end{proof}

\begin{lemma} \label{pSatStabilityPerfdIdeal}
    Let \(J\) be a perfectoid ideal of a perfectoid ring \(R\).
    Then the \(p\)-saturation \(\widetilde{J}\) of \(J\) in \(R\) is also a perfectoid ideal.
\end{lemma}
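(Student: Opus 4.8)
The plan is to reduce the statement to the assertion that, for the perfectoid ring $S\defeq R/J$, the quotient $S/S[p^{\infty}]$ is again a perfectoid ring. By \Cref{pSaturation} there is a canonical identification $R/\widetilde{J}=(R/J)/(R/J)[p^{\infty}]=S/S[p^{\infty}]$ (this is the ring $\overline{S}$ of \Cref{Notation} once $S$ is known to have characteristic $0$), so $\widetilde{J}$ is a perfectoid ideal of $R$ if and only if $S/S[p^{\infty}]$ is a perfectoid ring. First I would dispose of the degenerate case: if $p$ is nilpotent in $S$ (e.g.\ if $p\in J$), then $S=S[p^{\infty}]$, so $S/S[p^{\infty}]=0$ is perfectoid and there is nothing to prove. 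Hence we may assume that $p$ is not nilpotent in $S$; then $S$ has characteristic $0$, because if $n\cdot 1=0$ in $S$ for some integer $n\geq 1$, writing $n=p^{k}m$ with $p\nmid m$ and using that $m$ is invertible in the $p$-adically complete ring $S$ would give $p^{k}=0$, contrary to our assumption.

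The key step is to apply \cite[2.1.3]{cesnavicius2023Purity} --- which says that killing the $\pi$-power-torsion of a perfectoid ring, for $\pi$ a perfectoid element, again yields a perfectoid ring --- to a perfectoid element $\pi$ of $S$ chosen so that $\pi^{p}$ is a \emph{unit} multiple of $p$. Such a $\pi$ exists: by \cite[Lemma 3.9]{bhatt2018Integral} applied to $S$, there is a unit $v\in S^{\times}$ such that $vp$ admits a compatible system of $p$-power roots in $S$, and then $\pi\defeq (vp)^{1/p}\in S$ satisfies $\pi^{p}=vp$. This $\pi$ is a perfectoid element of $S$: one has $p=v^{-1}\pi^{p}\in\pi^{p}S$, and from $\pi^{p}=vp$ one gets $\sqrt{(\pi)}=\sqrt{(p)}$, so $S$ is classically $\pi$-adically complete because it is (being perfectoid) classically $p$-adically complete (\citeSta{090T}). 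Since $\pi^{p}$ is a unit multiple of $p$, we have $p^{n}x=0\iff\pi^{pn}x=0$ for every $x\in S$ and $n\geq 0$, and therefore $S[\pi^{\infty}]=S[p^{\infty}]$. By \cite[2.1.3]{cesnavicius2023Purity} the ring $S/S[\pi^{\infty}]$ is perfectoid, so $S/S[p^{\infty}]=R/\widetilde{J}$ is perfectoid, i.e.\ $\widetilde{J}$ is a perfectoid ideal of $R$.

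The one delicate point is the choice of $\pi$: an \emph{arbitrary} perfectoid element $\varpi$ of $S$ need not satisfy $S[\varpi^{\infty}]=S[p^{\infty}]$ --- equivalently, it need not be a unit multiple of a $p$-th root of $p$ --- and then \cite[2.1.3]{cesnavicius2023Purity} would only give that the possibly proper quotient $S/S[\varpi^{\infty}]$ of $S/S[p^{\infty}]$ is perfectoid, which does not suffice. Exhibiting a perfectoid element whose $p$-th power is a unit multiple of $p$ by means of \cite[Lemma 3.9]{bhatt2018Integral} is exactly what repairs this; everything else is a routine unwinding of the definitions involved.
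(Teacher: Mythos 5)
Your proof is correct and follows the paper's own route: identify $R/\widetilde{J}$ with $(R/J)/(R/J)[p^{\infty}]$ and cite the fact that killing the $p$-power torsion in a perfectoid ring again yields a perfectoid ring. The extra care you take in choosing $\pi$ with $\pi^{p}$ a unit multiple of $p$ (so as to force $S[\pi^{\infty}]=S[p^{\infty}]$) is harmless but not required, since \cite[2.1.3]{cesnavicius2023Purity} and \cite[Proposition 2.19]{dospinescu2023Conjecture} already give directly that $R/R[p^{\infty}]$ is perfectoid for any perfectoid ring $R$. One small slip in your discussion of the ``delicate point'': because $p\in\varpi^{p}R$ for any perfectoid element $\varpi$, one always has $S[\varpi^{\infty}]\subseteq S[p^{\infty}]$, so $S/S[p^{\infty}]$ is a quotient of $S/S[\varpi^{\infty}]$, not the other way around; this does not affect the validity of your argument, which sidesteps the issue by arranging equality of the two torsion submodules.
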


\begin{proof}
    Note that an element \(f\) of \(R\) is in \(\widetilde{J}\) if and only if the image of \(f\) in \(R/J\) is \(p\)-power torsion.
    That is, \(R/\widetilde{J}\) is the quotient of \(R/J\) by its \(p\)-power torsion part \((R/J)[p^\infty]\).
    By \cite[2.1.3]{cesnavicius2023Purity} or \cite[Proposition 2.19]{dospinescu2023Conjecture}, we see that \(\widetilde{J}\) is a perfectoid ideal.
\end{proof}

\begin{lemma} \label{TopClsIntersection}
    Let \(I\) and \(J\) be ideals of a \(t\)-adically separated ring \(R\) for some element \(t \in R\).
    Let us denote by \(\overline{\mfraka}^t\) the (topological) closure of an ideal \(\mfraka\) of \(R\) with respect to the \(t\)-adic topology of \(R\).
    Then we have
    \begin{equation*}
        \overline{I}^t \cap \overline{J}^t \subseteq \sqrt{\overline{I \cap J}^t}.
    \end{equation*}
\end{lemma}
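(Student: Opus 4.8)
The plan is to reduce the statement to the elementary description of the $t$-adic closure of an ideal together with the trivial inclusion $IJ \subseteq I \cap J$. Recall first that, since the ideals $t^n R$ form a fundamental system of neighborhoods of $0$ for the $t$-adic topology on $R$, the $t$-adic closure of an ideal $\mathfrak{a}$ of $R$ is
\[
\overline{\mathfrak{a}}^t = \bigcap_{n \geq 0}\bigl(\mathfrak{a} + t^n R\bigr);
\]
indeed, an element $x \in R$ lies in $\overline{\mathfrak{a}}^t$ if and only if $(x + t^n R) \cap \mathfrak{a} \neq \emptyset$ for every $n \geq 0$, which means precisely that $x \in \mathfrak{a} + t^n R$ for every $n$. (The $t$-adic separatedness of $R$ is not needed for this equality; it only ensures that the $t$-adic topology is Hausdorff.)

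The first step is to prove the slightly stronger inclusion $\overline{I}^t \cdot \overline{J}^t \subseteq \overline{I \cap J}^t$. Given $x \in \overline{I}^t$ and $y \in \overline{J}^t$, I would write, for each $n \geq 0$,
\[
x = a_n + t^n r_n, \qquad y = b_n + t^n s_n
\]
with $a_n \in I$, $b_n \in J$ and $r_n, s_n \in R$, and then expand
\[
xy = a_n b_n + t^n\bigl(a_n s_n + r_n b_n + t^n r_n s_n\bigr).
\]
Since $a_n b_n \in I J \subseteq I \cap J$, this exhibits $xy$ as an element of $(I \cap J) + t^n R$; as $n$ is arbitrary, $xy \in \bigcap_{n \geq 0}\bigl((I \cap J) + t^n R\bigr) = \overline{I \cap J}^t$.

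To finish, take any $x \in \overline{I}^t \cap \overline{J}^t$ and apply the previous step with $y = x$: this gives $x^2 \in \overline{I \cap J}^t$, hence $x \in \sqrt{\overline{I \cap J}^t}$, which is the desired inclusion. There is no real obstacle in this argument; the only point worth noting is that the radical on the right-hand side genuinely enters through the squaring in the last step, reflecting the fact that $t$-adic closure does not in general commute with finite intersections of ideals.
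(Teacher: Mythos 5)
Your proof is correct and follows essentially the same approach as the paper: both multiply an $I$-approximation and a $J$-approximation of the same element $x$ to land $x^2$ in $\overline{I\cap J}^{t}$. The paper phrases it with convergent sequences and limits where you use the explicit description $\overline{\mathfrak{a}}^t=\bigcap_n(\mathfrak{a}+t^nR)$, but the underlying argument is identical.
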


\begin{proof}
    Let \(f\) be an element of \(\overline{I}^t \cap \overline{J}^t\).
    Choose sequences \((f_n) \subset I\) and \((g_n) \subset J\) such that \(f = \lim_n f_n = \lim_n g_n\) in the \(t\)-adically separated ring \(R\).
    Then \((f_n g_n)\) is a sequence in \(IJ \subseteq I \cap J\) and \(f^2 = (\lim_n f_n) (\lim_n g_n) = \lim_n f_n g_n\)
    is in \(\overline{I \cap J}^t\).
\end{proof}

\begin{proposition} \label{SharpPrimetoPrime}
    Let \(R\) be a perfectoid ring of characteristic \(0\) and let \(\mfrakq\) be a \(p^\flat\)-adically closed prime ideal of \(R^\flat\).
    Then the untilt \(\mfrakq^\sharp\) is a prime ideal of \(R\).
\end{proposition}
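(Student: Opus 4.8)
The plan is to prove that $R/\mathfrak{q}^{\sharp}$ is an integral domain. By \Cref{TiltandUntiltCorresp} the ideal $\mathfrak{q}^{\sharp}$ is a perfectoid ideal of $R$ and $(-)^{\flat}$ induces an isomorphism $R^{\flat}/\mathfrak{q}\cong (R/\mathfrak{q}^{\sharp})^{\flat}$, which is an integral domain since $\mathfrak{q}$ is prime; also $\mathfrak{q}^{\sharp}\neq R$, since otherwise $\mathfrak{q}=R^{\flat}$ by the injectivity in \Cref{CorrespPerfdIdeals}. As $\mathfrak{q}^{\sharp}$ is perfectoid, it is in particular a radical ideal by \Cref{CorrespSpecRed}. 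I would first reduce to two cases. Since $\mathfrak{q}^{\sharp}$ is radical, $\mathfrak{q}^{\sharp}=\widetilde{\mathfrak{q}^{\sharp}}\cap\sqrt{\mathfrak{q}^{\sharp}+pR}$ (the intersection of the minimal primes over $\mathfrak{q}^{\sharp}$ not containing $p$ with those containing $p$), where $\widetilde{\mathfrak{q}^{\sharp}}$ is a perfectoid ideal by \Cref{pSatStabilityPerfdIdeal} and $\sqrt{\mathfrak{q}^{\sharp}+pR}$ is a perfectoid ideal by \Cref{IdealsEquivCharp}. Applying $(-)^{\flat}$, which by \Cref{TiltandUntiltCorresp} and \Cref{CorrespPerfdIdeals} sends these to $p^{\flat}$-adically closed radical ideals of $R^{\flat}$ and satisfies $\mathfrak{q}=(\mathfrak{q}^{\sharp})^{\flat}=(\widetilde{\mathfrak{q}^{\sharp}})^{\flat}\cap(\sqrt{\mathfrak{q}^{\sharp}+pR})^{\flat}$, the primality of $\mathfrak{q}$ forces one intersectand to equal $\mathfrak{q}^{\sharp}$; since $\sqrt{\mathfrak{q}^{\sharp}+pR}$ strictly contains $\mathfrak{q}^{\sharp}$ whenever $p\notin\mathfrak{q}^{\sharp}$, we conclude that either $p\in\mathfrak{q}^{\sharp}$ or $\mathfrak{q}^{\sharp}$ is $p$-saturated. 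In the first case $R/\mathfrak{q}^{\sharp}$ is a perfectoid ring of characteristic $p$, hence perfect, hence canonically isomorphic to its own tilt $R^{\flat}/\mathfrak{q}$, which is a domain, and we are done. So from now on I assume $\mathfrak{q}^{\sharp}=\widetilde{\mathfrak{q}^{\sharp}}$ is $p$-saturated and $p\notin\mathfrak{q}^{\sharp}$.

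Assume for contradiction that $R/\mathfrak{q}^{\sharp}$ is not a domain; using that $\mathfrak{q}^{\sharp}$ is radical, pick $a,b\in R\setminus\mathfrak{q}^{\sharp}$ with $ab\in\mathfrak{q}^{\sharp}$, and set $I\defeq\overline{\sqrt{\mathfrak{q}^{\sharp}+aR}}^{\,p}$ and $J\defeq\overline{\sqrt{\mathfrak{q}^{\sharp}+bR}}^{\,p}$, the $p$-adic closures in $R$. These are $p$-adically closed ideals with $\mathfrak{q}^{\sharp}\subsetneq I$, $\mathfrak{q}^{\sharp}\subsetneq J$; since the $p$-adic topology on the classically $p$-adically complete ring $R$ is metrizable, the quotients $R/I$ and $R/J$ are again $p$-adically complete, so $I$ and $J$ are derived $p$-complete. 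Because $\sqrt{\mathfrak{q}^{\sharp}+aR}\cap\sqrt{\mathfrak{q}^{\sharp}+bR}=\sqrt{\mathfrak{q}^{\sharp}+abR}=\mathfrak{q}^{\sharp}$ (as $ab\in\mathfrak{q}^{\sharp}$ and $\mathfrak{q}^{\sharp}$ is radical) and $\mathfrak{q}^{\sharp}$ is $p$-adically closed, \Cref{TopClsIntersection} (with $t=p$) gives
\begin{equation*}
    I\cap J\ \subseteq\ \sqrt{\overline{\sqrt{\mathfrak{q}^{\sharp}+aR}\cap\sqrt{\mathfrak{q}^{\sharp}+bR}}^{\,p}}\ =\ \sqrt{\overline{\mathfrak{q}^{\sharp}}^{\,p}}\ =\ \mathfrak{q}^{\sharp},
\end{equation*}
hence $I\cap J=\mathfrak{q}^{\sharp}$. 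Now set $K_{1}\defeq\widetilde{I_{\perfd}}$ and $K_{2}\defeq\widetilde{J_{\perfd}}$; by \Cref{pSatStabilityPerfdIdeal} these are perfectoid ideals, and they still satisfy $\mathfrak{q}^{\sharp}\subsetneq K_{1}$, $\mathfrak{q}^{\sharp}\subsetneq K_{2}$. Since $I\cap J=\mathfrak{q}^{\sharp}$ is perfectoid and $p$-saturated, $\widetilde{(I\cap J)_{\perfd}}=\mathfrak{q}^{\sharp}$, so \Cref{IntersectionpSatPerfdIdeal} yields $K_{1}\cap K_{2}=\widetilde{(I\cap J)_{\perfd}}=\mathfrak{q}^{\sharp}$.

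To conclude, apply $(-)^{\flat}$: it commutes with intersections directly from its definition (\ref{DefTilt}), is inclusion-preserving, and is injective on perfectoid ideals and sends them to $p^{\flat}$-adically closed radical ideals of $R^{\flat}$ by \Cref{CorrespPerfdIdeals} and \Cref{TiltandUntiltCorresp}. Hence $\mathfrak{q}=(\mathfrak{q}^{\sharp})^{\flat}=K_{1}^{\flat}\cap K_{2}^{\flat}$ with $K_{1}^{\flat}$ and $K_{2}^{\flat}$ ideals of $R^{\flat}$ \emph{strictly} containing $\mathfrak{q}$; but then $K_{1}^{\flat}K_{2}^{\flat}\subseteq K_{1}^{\flat}\cap K_{2}^{\flat}=\mathfrak{q}$ contradicts the primality of $\mathfrak{q}$. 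Therefore $R/\mathfrak{q}^{\sharp}$ is an integral domain, i.e.\ $\mathfrak{q}^{\sharp}$ is a prime ideal of $R$.

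The step I expect to be the main obstacle is exactly the passage from the naive auxiliary ideals $\sqrt{\mathfrak{q}^{\sharp}+aR}$ and $\sqrt{\mathfrak{q}^{\sharp}+bR}$ — which are a priori neither topologically closed nor perfectoid — to honest perfectoid ideals $K_{1},K_{2}$ whose intersection is still \emph{exactly} $\mathfrak{q}^{\sharp}$ rather than merely contained in it: one must go through the $p$-adic closure (to gain derived $p$-completeness), through perfectoidization, and through $p$-saturation, and control the behaviour of the intersection at each step. This is precisely what the chain of lemmas \Cref{PerfdIsSpecrad}, \Cref{IntersectionpSatPerfdIdeal}, \Cref{pSatStabilityPerfdIdeal} and \Cref{TopClsIntersection} is built to handle, and it is also the reason the preliminary reduction isolating the case where $\mathfrak{q}^{\sharp}$ is $p$-saturated (so that $p$-saturation does not distort the intersection formula of \Cref{IntersectionpSatPerfdIdeal}) is needed.
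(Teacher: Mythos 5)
Your proof is correct and rests on precisely the same chain of lemmas as the paper's (\Cref{TopClsIntersection}, \Cref{IntersectionpSatPerfdIdeal}, \Cref{pSatStabilityPerfdIdeal}, \Cref{CorrespPerfdIdeals}), but it organizes the argument differently in one place worth noting. The paper does not separate out a ``$\mathfrak{q}^\sharp$ is $p$-saturated'' case: it takes \emph{arbitrary} ideals $I,J$ with $IJ\subseteq\mathfrak{q}^\sharp$ (no contradiction), passes to $\widetilde{(\overline{I}^p)_{\perfd}}\cap\widetilde{(\overline{J}^p)_{\perfd}}=\widetilde{(\overline{I}^p\cap\overline{J}^p)_{\perfd}}\subseteq\widetilde{\mathfrak{q}^\sharp}$, and then handles the potentially non-$p$-saturated $\mathfrak{q}^\sharp$ by observing directly that $(\widetilde{\mathfrak{q}^\sharp})^\flat\subseteq\mathfrak{q}$ whenever $p^\flat\notin\mathfrak{q}$: if $p^n f^\sharp\in\mathfrak{q}^\sharp$ then $(p^\flat)^n f\in\mathfrak{q}^{\sharp\flat}=\mathfrak{q}$, so $f\in\mathfrak{q}$ by primality. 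You instead eliminate the issue at the outset through the decomposition $\mathfrak{q}^\sharp=\widetilde{\mathfrak{q}^\sharp}\cap\sqrt{\mathfrak{q}^\sharp+pR}$ (both intersectands perfectoid by \Cref{pSatStabilityPerfdIdeal} and \Cref{IdealsEquivCharp}), tilt it, and use primality plus injectivity of $(-)^\flat$ on perfectoid ideals to force $\mathfrak{q}^\sharp$ to be $p$-saturated or to contain $p$; in the $p$-saturated case the $p$-saturation in \Cref{IntersectionpSatPerfdIdeal} becomes inert and $\widetilde{(I\cap J)_{\perfd}}=\mathfrak{q}^\sharp$ exactly. These two devices neutralize the same difficulty, and the rest of your argument is a contradiction-style rewriting (with $I,J$ built from a hypothetical zero divisor) of the paper's direct manipulation with arbitrary $I,J$, so the deviation is organizational rather than substantive. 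One typo: in your first paragraph, primality of $\mathfrak{q}$ forces one intersectand to equal $\mathfrak{q}$, not $\mathfrak{q}^\sharp$.
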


\begin{proof}
    In view of the canonical isomorphism \(R/p R \cong R^\flat/p^\flat R^\flat\),
    the assertion is clear if \(p^\flat \in \mfrakq\) (or equivalently, if \(p \in \mfrakq^\sharp\)).
    Hence we may assume that \(p^\flat \notin \mfrakq\).
    We proceed as in the proof of \cite[Theorem 4.16]{dine2022Topologicala}.
    So, let \(I\) and \(J\) be ideals of \(R\) with \(IJ \subseteq \mfrakq^\sharp\) and it suffices to show that either \(I\) or \(J\) is contained in \(\mfrakq^\sharp\).

    By \Cref{TiltandUntiltCorresp},
    the untilt \(\mfrakq^\sharp\) is a perfectoid ideal of \(R\).
    Then \(\mfrakq^\sharp\) is a radical ideal by the reduced property of perfectoid rings (see \cite[2.1.3]{cesnavicius2023Purity}) and
    the inclusion \(IJ \subseteq \mfrakq^\sharp\) implies \(I \cap J \subseteq \mfrakq^\sharp\).
    Since any perfectoid ideal of \(R\) is \(p\)-adically closed in \(R\),
    we have
    \begin{equation*}
        \sqrt{\overline{I \cap J}^p} \subseteq \mfrakq^\sharp
    \end{equation*}
    where \(\overline{I \cap J}^p\) is the (topological) closure of \(I \cap J\) with respect to the \(p\)-adic topology of \(R\).
    By \Cref{TopClsIntersection},
    we have
    \begin{equation*}
        \overline{I}^p \cap \overline{J}^p \subseteq \mfrakq^\sharp.
    \end{equation*}
    By \Cref{IntersectionpSatPerfdIdeal},
    we see that
    \begin{equation*}
        \widetilde{(\overline{I}^p)_{\perfd}} \cap \widetilde{(\overline{J}^p)_{\perfd}} = \widetilde{(\overline{I}^p \cap \overline{J}^p)_{\perfd}}.
    \end{equation*}
    Since the map \((-)^\flat\) from subsets of \(R\) to subsets of \(R^\flat\) preserves inclusions and intersections,
    this means that
    \begin{equation*}
        (\widetilde{(\overline{I}^p)_{\perfd}})^\flat \cap (\widetilde{(\overline{J}^p)_{\perfd}})^\flat \subseteq (\widetilde{\mfrakq^\sharp})^\flat.
    \end{equation*}
    By the definition of the tilting operation (\ref{DefTilt}),
    if \(f \in (\widetilde{\mfrakq^\sharp})^\flat\),
    then \(p^n f^\sharp \in \mfrakq^\sharp\) for some \(n \geq 0\).
    Using the fact that \(p^n = ((p^\flat)^n)^\sharp \in R\) and \Cref{CorrespPerfdIdeals},
    we have \((p^\flat)^n f \in \mfrakq^{\sharp \flat} = \mfrakq\) and thus \(f \in \mfrakq\) because of the assumption that \(p^\flat \notin \mfrakq\).
    This shows that
    \begin{equation*}
        (\widetilde{(\overline{I}^p)_{\perfd}})^\flat \cap (\widetilde{(\overline{J}^p)_{\perfd}})^\flat \subseteq \mfrakq.
    \end{equation*}
    Since \(\mfrakq\) is a prime ideal of \(R^\flat\),
    we see that either
    \begin{equation*}
        (\widetilde{(\overline{I}^p)_{\perfd}})^\flat \subseteq \mfrakq \text{ or } (\widetilde{(\overline{J}^p)_{\perfd}})^\flat \subseteq \mfrakq.
    \end{equation*}
    On the other hand,
    \Cref{pSatStabilityPerfdIdeal} ensures that \(\widetilde{(\overline{I}^p)_{\perfd}}\) and \(\widetilde{(\overline{J}^p)_{\perfd}}\) are perfectoid ideals of \(R\).
    Applying the inclusion-preserving map \((-)^\sharp\) and using \Cref{CorrespPerfdIdeals},
    we conclude that
    \begin{equation*}
        I \subseteq \widetilde{(\overline{I}^p)_{\perfd}} \subseteq \mfrakq^\sharp \text{ or } J \subseteq \widetilde{(\overline{J}^p)_{\perfd}} \subseteq \mfrakq^\sharp,
    \end{equation*}
    as desired.
\end{proof}

\section{The Perfectoid Spectrum}

We will show that the one-to-one correspondence in \Cref{CorrespPerfdIdeals} induces a homeomorphism between a certain subspace of the spectrum of $R$ and a certain subspace of the spectrum of $R^{\flat}$.

\begin{definition} \label{PerfdSpec}
    For a perfectoid ring $R$ of characteristic $0$, the perfectoid spectrum of $R$ is the subspace
    \begin{equation*}
        \Spec_{\perfd}(R) \defeq\{\, \mathfrak{p}\in\Spec(R)\mid \mathfrak{p}~\textrm{is a perfectoid ideal}\,\},
    \end{equation*}
    of $\Spec(R)$ endowed with the subspace topology induced from the Zariski topology on $\Spec(R)$. The perfectoid spectrum of $R^{\flat}$ is the subspace
        \begin{equation*}
            \Spec_{\perfd}(R^{\flat}) \defeq\{\, \mathfrak{p}\in\Spec(R^{\flat})\mid \mathfrak{p}~\textrm{is}~p^{\flat}\textrm{-adically closed}\,\}
        \end{equation*}
    of $\Spec(R^{\flat})$, again with the subspace topology induced from the Zariski topology.

\end{definition}

We recall a variant of the notion of the topological spectrum from \cite{dine2022Topologicala}.

\begin{definition}[cf. {\cite[Definition 2.18]{dine2022Topologicala}}]
    The \textit{topological spectrum} of a Tate ring $A$ is the subspace 
    \begin{equation*}
        \Spec_{\Top}(A) \defeq \set{\mfrakp \in \Spec(A)}{\text{\(\mfrakp\) is spectrally reduced}}
    \end{equation*}
    of $\Spec(A)$, where the topology on $\Spec_{\Top}(A)$ is the subspace topology induced from the Zariski topology on $\Spec(A)$.
\end{definition}
We note that, by \cite[Remark 2.19]{dine2022Topologicala}, for any seminorm $\norm{\cdot}$ defining the topology on $A$ such that there exists a topologically nilpotent unit $t\in A$ multiplicative with respect to $\norm{\cdot}$, the topological spectrum of $A$ defined as above is equal to the topological spectrum of the seminormed ring $(A, \norm{\cdot})$ in the sense of \cite[Definition 2.18]{dine2022Topologicala}. 

First, we record the following observations about the perfectoid spectrum and the topological spectrum.




\begin{lemma} \label{EqualitySomeSpectrum}
    Let $R$ be a perfectoid ring of characteristic $0$, let $\pi$ be a perfectoid element of $R$ and let $\pi^{\flat}$ be an element of $R^{\flat}$ such that $\pi^{\flat\sharp}$ is a unit multiple of $\pi$ in $R$ (by \cite[Lemma 3.9]{bhatt2018Integral}, such an element $\pi^\flat$ always exists). Then $\Spec_{\Top}(R[1/\pi])$ is equal to the open subspace $D(\pi)\cap \Spec_{\perfd}(R)$ of $\Spec_{\perfd}(R)$ and $\Spec_{\Top}(R^{\flat}[1/\pi^{\flat}])$ is equal to the open subspace $D(\pi^{\flat})\cap \Spec_{\perfd}(R^{\flat})$ of $\Spec_{\perfd}(R^{\flat})$.
\end{lemma}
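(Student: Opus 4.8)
The plan is to identify the two relevant sets of prime ideals directly, using the correspondences between $\Spec$ of a Tate ring and $\Spec$ of a pair of definition, and then to check that the inclusions go both ways using the earlier lemmas. First I would recall that, since $(R/R[\pi^\infty], (\pi))$ is a pair of definition of the Tate ring $R[1/\pi]$, there is the standard order-preserving bijection between $\Spec(R[1/\pi])$ and the subset $D(\pi) \subseteq \Spec(R)$ of primes not containing $\pi$, given by $\mathfrak{p} \mapsto \mathfrak{p} \cap R$ in one direction and extension-and-saturation in the other; concretely a prime $\mathfrak{q}$ of $R$ with $\pi \notin \mathfrak{q}$ corresponds to the prime $\mathfrak{q}[1/\pi]$ of $R[1/\pi]$, and $\mathfrak{q}$ is automatically $\pi$-saturated (hence $p$-saturated, since $pR \subseteq \pi^p R$), and $R/\mathfrak{q}$ is $\pi$-torsion-free. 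The same discussion applies verbatim to $R^\flat$ with $\pi^\flat$ in place of $\pi$, using that $(\pi^\flat)^p$ divides $p^\flat$ in $R^\flat$ (from \cite[2.1.2]{cesnavicius2023Purity}, as already used in the proof of \Cref{Perfectoid elements and untilting}), so $D(\pi^\flat) = D(p^\flat)$ inside $\Spec(R^\flat)$ and hence $D(\pi^\flat) \cap \Spec_{\perfd}(R^\flat)$ really is an open subspace of $\Spec_{\perfd}(R^\flat)$.

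Next I would prove the set-theoretic equality $\Spec_{\Top}(R[1/\pi]) = D(\pi) \cap \Spec_{\perfd}(R)$ by translating both sides across this bijection. Given a prime $\mathfrak{q}$ of $R$ with $\pi \notin \mathfrak{q}$: if $\mathfrak{q}$ is a perfectoid ideal, then $\mathfrak{q}$ is in particular derived $p$-complete, and since $R/\mathfrak{q}$ is $\pi$-torsion-free, \Cref{CorrespSpecRed} gives that $\mathfrak{q}[1/\pi]$ is spectrally reduced, i.e. lies in $\Spec_{\Top}(R[1/\pi])$. Conversely, if $\mathfrak{q}[1/\pi]$ is spectrally reduced, then it is in particular a prime ideal of $R[1/\pi]$ — but I need the corresponding ideal of $R$ to be derived $p$-complete before I can invoke \Cref{CorrespSpecRed} in the reverse direction. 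Here the cleanest route is: a spectrally reduced ideal is closed in $R[1/\pi]$, so $R[1/\pi]/\mathfrak{q}[1/\pi]$ is a complete Tate ring with pair of definition $(R/\mathfrak{q}, (\pi))$ (using $\pi$-saturation of $\mathfrak{q}$), hence $R/\mathfrak{q}$ is classically $\pi$-adically complete, hence classically $p$-adically complete by \citeSta{090T} (as $pR \subseteq \pi R$), hence derived $p$-complete; thus $\mathfrak{q}$ is a derived $p$-complete ideal. This is exactly the argument already run in the proof of \Cref{CorrespSpecRed}, so I would just cite that passage. Now \Cref{CorrespSpecRed} (the $\pi$-torsion-free converse direction) applies and shows $\mathfrak{q}$ is a perfectoid ideal, i.e. $\mathfrak{q} \in \Spec_{\perfd}(R)$. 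Since the bijection $\Spec(R[1/\pi]) \cong D(\pi)$ is a homeomorphism for the Zariski topologies and the subspace topologies agree, the set equality upgrades to the asserted topological identification.

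Finally, the statement for $R^\flat$ follows by the same argument applied to the perfect(oid) ring $R^\flat$ of characteristic $p$, the perfectoid element $\pi^\flat$, and the Tate ring $R^\flat[1/\pi^\flat]$ — here the characteristic-$p$ half of \Cref{IdealsEquivCharp} does the work of \Cref{CorrespSpecRed}: for a prime $\mathfrak{q}'$ of $R^\flat$ with $\pi^\flat \notin \mathfrak{q}'$, the ideal $\mathfrak{q}'[1/\pi^\flat]$ of $R^\flat[1/\pi^\flat]$ is spectrally reduced if and only if it is a closed radical ideal, and (again via the pair-of-definition bijection and $\pi^\flat$-adic completeness of the quotient) this is equivalent to $\mathfrak{q}'$ being a $\pi^\flat$-adically closed, equivalently $p^\flat$-adically closed, prime ideal of $R^\flat$, i.e. to $\mathfrak{q}' \in D(\pi^\flat) \cap \Spec_{\perfd}(R^\flat)$. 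I expect the main obstacle to be the bookkeeping of the completeness/closedness bridge in the converse direction — making sure that "spectrally reduced $\Rightarrow$ closed $\Rightarrow$ the preimage ideal in $R$ is derived $p$-complete" is stated cleanly enough to license the application of \Cref{CorrespSpecRed} — but since all the needed implications are already carried out inside the proof of \Cref{CorrespSpecRed}, no genuinely new technical input is required.
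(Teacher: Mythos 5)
Your proposal is correct and takes essentially the same route as the paper's two-line proof, which likewise reduces the claim to \Cref{CorrespSpecRed}, \Cref{IdealsEquivCharp}, and \Cref{Perfectoid elements and untilting} via the pair-of-definition bijection $\Spec(R[1/\pi]) \cong D(\pi)$. You simply make explicit the bookkeeping (derived $p$-completeness of the contraction, the closedness transfer between $\mathfrak{q}$ and $\mathfrak{q}[1/\pi]$) that the paper leaves to the reader by implicitly reusing the argument inside the proof of \Cref{CorrespSpecRed}.
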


\begin{proof}
    By \Cref{Perfectoid elements and untilting} a prime ideal of $R^{\flat}$ is $\pi^{\flat}$-adically closed if and only if it is $p^{\flat}$-adically closed. Hence the assertion follows from \Cref{CorrespSpecRed} and \Cref{IdealsEquivCharp}.
\end{proof}

Combining \Cref{CorrespPerfdIdeals} and \Cref{SharpPrimetoPrime}, we obtain a correspondence (\Cref{CorrespPrimePerfdIdeals} below) between perfectoid prime ideals of a perfectoid ring \(R\) of characteristic $0$ and \(p^\flat\)-adically closed prime ideals of \(R^\flat\).
This correspondence is indeed a homeomorphism, not only a bijection. To prove this, we record the following lemma.

\begin{lemma} \label{ClosuresOfRadicalIdeals}
    Let $R$ be a perfect(oid) ring of characteristic $p$ and let $I, J$ be ideals of $R$.
    If $J$ is a radical ideal, then so is $\overline{J}^{I}$, the closure of $J$ with respect to the $I$-adic topology on $R$. 
\end{lemma}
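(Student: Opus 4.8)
The plan is to exploit perfectness by inverting the Frobenius and showing that the $I$-adic closure operation commutes with the inverse Frobenius; the radicality of $\overline{J}^I$ then falls out.

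Write $\Phi\colon R\to R$, $\Phi(x)=x^p$, for the Frobenius, which is a ring automorphism since $R$ is perfect, and set $\sigma\defeq\Phi^{-1}$, so $\sigma(x)^p=x$. Then $\sigma$ is additive (characteristic $p$), carries ideals to ideals, and commutes with finite sums, finite products and arbitrary intersections of ideals; moreover $\sigma(\mfraka)=\set{x\in R}{x^p\in\mfraka}$ for any ideal $\mfraka$, so that \emph{$\mfraka$ is radical if and only if $\sigma(\mfraka)=\mfraka$} (the inclusion $\mfraka\subseteq\sigma(\mfraka)$ is automatic; if $\sigma(\mfraka)=\mfraka$ then $\sqrt{\mfraka}=\bigcup_k\sigma^k(\mfraka)=\mfraka$). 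In particular $\sigma(J)=J$, as $J$ is radical. The key computation is then
\[
\sigma\bigl(\overline{J}^I\bigr)=\sigma\Bigl(\bigcap_{n\ge 0}(J+I^n)\Bigr)=\bigcap_{n\ge 0}\bigl(\sigma(J)+\sigma(I)^n\bigr)=\bigcap_{n\ge 0}\bigl(J+\sigma(I)^n\bigr)=\overline{J}^{\,\sigma(I)} ,
\]
so by the radicality criterion it suffices to prove $\overline{J}^{\,\sigma(I)}=\overline{J}^I$. One inclusion, $\overline{J}^I\subseteq\overline{J}^{\,\sigma(I)}$, is immediate from $I\subseteq\sigma(I)$; the content is the reverse inclusion, which amounts to: \emph{for every $m$ there is $N$ with $\sigma(I)^N\subseteq J+I^m$.}

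Here the crucial input is the identity $\sigma(I)^{[p]}=I$, valid in any perfect ring (the Frobenius power $\mfraka^{[p]}$, i.e.\ the ideal generated by $p$-th powers of elements of $\mfraka$, equals $\Phi(\mfraka)$ by additivity of $\Phi$, and $\Phi(\sigma(I))=I$). If $I=(a_1,\dots,a_r)$ is finitely generated, then $\sigma(I)=(a_1^{1/p},\dots,a_r^{1/p})$, and a pigeonhole count does it: a generating monomial $\prod_i(a_i^{1/p})^{e_i}$ of $\sigma(I)^N$, with $\sum_i e_i=N$, satisfies $\sum_i\lfloor e_i/p\rfloor\ge(N-r(p-1))/p\ge m$ once $N\ge pm+r(p-1)$, hence is divisible by $\prod_i a_i^{f_i}$ for suitable integers $f_i\ge 0$ with $\sum_i f_i=m$, i.e.\ lies in $I^m$. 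Thus $\sigma(I)^N\subseteq I^m$ and $\overline{J}^{\,\sigma(I)}\subseteq\overline{J}^I$, completing the argument for finitely generated $I$. (When $I=(t)$ is principal this is transparent: $\sigma(I)^{pm}=(t^m)=I^m$.)

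For an arbitrary ideal $I$ one would reduce to the finitely generated case by writing $I=\bigcup_\alpha I_\alpha$ as the directed union of its finitely generated subideals, so that $I^n=\bigcup_\alpha I_\alpha^n$, and using that a directed union of radical ideals is radical. The step I expect to be the main obstacle is exactly this reduction: the bound $N\ge pm+r(p-1)$ depends on the number of generators $r$, while an element of $I^n$ may only be expressible using a number of generators growing with $n$, so one must argue carefully when passing from the finitely generated case to the general one — e.g.\ by showing $\overline{J}^I$ is exhausted by the $\overline{J}^{I_\alpha}$, or by a direct limiting argument on the data $f=j_n+h_n$ with $j_n\in J$ and $h_n\in\sigma(I)^n$. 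Granting this, we conclude $\sigma(\overline{J}^I)=\overline{J}^{\,\sigma(I)}=\overline{J}^I$, so $\overline{J}^I$ is radical.
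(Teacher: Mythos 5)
Your approach is genuinely different from the paper's, which simply observes that $R/\overline{J}^{I}$ is (a subring of) the $I$-adic completion of the perfect ring $R/J$ and cites \cite[Lemma 3.6]{dietz2007Big} for the reducedness of that completion. The mechanism you set up --- radicality of an ideal $\mathfrak{a}$ in a perfect ring is equivalent to $\sigma(\mathfrak{a})=\mathfrak{a}$, and $\sigma$ passes through the constituents $J+I^n$ of the closure --- is correct, and the pigeonhole estimate $\sigma(I)^N\subseteq I^m$ for $N\ge pm+r(p-1)$ is a valid, self-contained argument when $I$ has $r$ generators. In particular the principal case is immediate, and since every invocation of this lemma in the paper takes $I=(p^\flat)$ or $(\pi^\flat)$ principal, your argument covers all of the paper's actual uses of the statement.

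The gap you flag is real, and it cannot be closed along the lines you suggest: the inclusion $\sigma(I)^N\subseteq J+I^m$ on which your reverse containment rests genuinely fails for non-finitely-generated $I$. Take $R=\mathbb{F}_p[x_i^{1/p^{\infty}}:i\in\mathbb{N}]$, $I=(x_1,x_2,\dots)$, $J=0$; for every $N$ the monomial $x_1^{1/p}x_2^{1/p}\cdots x_N^{1/p}$ lies in $\sigma(I)^N$ but not in $I$ (no variable occurs with exponent $\ge 1$), so $\sigma(I)^N\not\subseteq I$ for any $N$. The fall-back reduction via a directed union $I=\bigcup_\alpha I_\alpha$ of finitely generated subideals also runs into exactly the difficulty you anticipate: from $f\in J+I^n=\bigcup_\alpha(J+I_\alpha^n)$ you only obtain an $\alpha(n)$ depending on $n$, and the intersection over $n$ does not commute with the union over $\alpha$, so $\overline{J}^{I}$ need not be exhausted by the $\overline{J}^{I_\alpha}$. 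Proving $\overline{J}^{\sigma(I)}=\overline{J}^{I}$ for arbitrary $I$ would thus require a new idea; the paper's route via Dietz's completion lemma sidesteps this issue entirely.
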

\begin{proof}
    The quotient $R/\overline{J}^{I}$ is the $I$-adic completion of the perfect ring $R/J$, so the lemma follows from \cite[Lemma 3.6]{dietz2007Big}.
\end{proof}

\begin{theorem} \label{CorrespPrimePerfdIdeals}
    Let $R$ be a perfectoid ring of characteristic $0$.
    Then the tilting map (\ref{DefTilt})
    \begin{equation*}
        \mathfrak{p} \mapsto \mathfrak{p}^{\flat} = \set{f = (f^{(n)})_n \in R^\flat}{f^\sharp \in \mfrakp}
    \end{equation*}
    induces a homeomorphism 
    \begin{equation*}
        \Spec_{\perfd}(R) \xrightarrow{\simeq} \Spec_{\perfd}(R^{\flat}),
    \end{equation*}
    whose inverse is given by the untilting map $\mathfrak{q}\mapsto\mathfrak{q}^\sharp$ defined in (\ref{DefUntilt}).
\end{theorem}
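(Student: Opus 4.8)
The plan is to assemble the statement from the two main ingredients already proved. First I would observe that by \Cref{CorrespPerfdIdeals} the maps $(-)^{\flat}$ and $(-)^{\sharp}$ are mutually inverse inclusion-preserving bijections between perfectoid ideals of $R$ and $p^{\flat}$-adically closed radical ideals of $R^{\flat}$. The tilt of a $p$-adically closed prime ideal is prime (this is the ``simple observation'' recorded in \Cref{TiltUntiltIdeal}; concretely, $R^{\flat}/\mfrakp^{\flat}\cong(R/\mfrakp)^{\flat}$ by \Cref{TiltandUntiltCorresp}, and the tilt of a domain is a domain since $\varprojlim_{x\mapsto x^{p}}$ of a domain is a domain), and since any perfectoid prime ideal $\mfrakp$ is automatically $p$-adically closed, $\mfrakp^{\flat}$ lands in $\Spec_{\perfd}(R^{\flat})$. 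Conversely, \Cref{SharpPrimetoPrime} shows $(-)^{\sharp}$ sends a $p^{\flat}$-adically closed prime ideal $\mfrakq$ to a prime ideal of $R$, which is perfectoid by \Cref{TiltandUntiltCorresp}, hence lies in $\Spec_{\perfd}(R)$. Combined with \Cref{CorrespPerfdIdeals}, these two maps are therefore inverse bijections between $\Spec_{\perfd}(R)$ and $\Spec_{\perfd}(R^{\flat})$.

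It then remains to check that both maps are continuous for the subspace Zariski topologies, for which it suffices to show that $(-)^{\flat}$ is a closed map and that $(-)^{\sharp}$ is a closed map (a continuous bijection that is also closed is a homeomorphism, and since each is the inverse of the other it is enough to prove both send closed sets to closed sets). Closed subsets of $\Spec_{\perfd}(R)$ are of the form $V(\mfraka)\cap\Spec_{\perfd}(R)$ for an ideal $\mfraka$ of $R$; since every perfectoid prime ideal containing $\mfraka$ contains $\sqrt{\mfraka}$ and, being $p$-adically closed, contains the $p$-adic closure $\overline{\sqrt{\mfraka}}^{p}$, I may replace $\mfraka$ by the ideal $\mfrakb \defeq \overline{\sqrt{\mfraka}}^{p}$ without changing this closed set, and likewise pass to the perfectoid ideal $\mfrakb_{\perfd}$; the upshot is that each closed subset of $\Spec_{\perfd}(R)$ has the form $V(J)\cap\Spec_{\perfd}(R)$ for a \emph{perfectoid} ideal $J$. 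The key claim is then that for a perfectoid ideal $J$ of $R$,
\begin{equation*}
    (-)^{\flat}\bigl(V(J)\cap\Spec_{\perfd}(R)\bigr)=V(J^{\flat})\cap\Spec_{\perfd}(R^{\flat}),
\end{equation*}
and symmetrically that $(-)^{\sharp}$ carries $V(I)\cap\Spec_{\perfd}(R^{\flat})$ to $V(I^{\sharp})\cap\Spec_{\perfd}(R)$ for a $p^{\flat}$-adically closed radical ideal $I$; here one uses \Cref{ClosuresOfRadicalIdeals} in the characteristic-$p$ side to know that closed subsets of $\Spec_{\perfd}(R^{\flat})$ likewise have the form $V(I)$ with $I$ a $p^{\flat}$-adically closed radical ideal. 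Since $(-)^{\flat}$ preserves inclusions, $J\subseteq\mfrakp$ gives $J^{\flat}\subseteq\mfrakp^{\flat}$, so the image is contained in the right-hand side; for the reverse inclusion, given a $p^{\flat}$-adically closed prime $\mfrakq\supseteq J^{\flat}$, apply $(-)^{\sharp}$ and \Cref{CorrespPerfdIdeals} to get $\mfrakq^{\sharp}=\mfrakq^{\sharp}\supseteq J^{\flat\sharp}=J$, and $\mfrakq=\mfrakq^{\sharp\flat}$ lies in the image. The symmetric statement for $(-)^{\sharp}$ is identical with the roles reversed.

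I expect the main obstacle to be the bookkeeping in the previous paragraph: namely, verifying carefully that an arbitrary Zariski-closed subset of $\Spec_{\perfd}(R)$ (respectively, of $\Spec_{\perfd}(R^{\flat})$) really can be cut out by a perfectoid ideal (respectively, a $p^{\flat}$-adically closed radical ideal), so that the clean inclusion-reversing formulas above apply. On the $R$-side this needs the facts that $p$-adic closure and radical and perfectoidization $(-)_{\perfd}$ can all be applied without enlarging the set of perfectoid primes containing the ideal — the first because perfectoid primes are $p$-adically closed, the second because they are radical, the third by \Cref{Semiperfectoid} and the surjectivity of $R\to(R/\mfraka)_{\perfd}$; on the $R^{\flat}$-side one analogously needs that $p^{\flat}$-adic closure preserves the radical property, which is exactly \Cref{ClosuresOfRadicalIdeals}. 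Once this reduction is in place the continuity of both maps, and hence the homeomorphism assertion, follows formally.
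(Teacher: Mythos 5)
Your proposal is correct and takes essentially the same approach as the paper: establish the bijection via \Cref{CorrespPerfdIdeals} and \Cref{SharpPrimetoPrime}, then reduce arbitrary Zariski-closed subsets of $\Spec_{\perfd}(R)$ to those cut out by perfectoid ideals (via radical, $p$-adic closure, and perfectoidization) and of $\Spec_{\perfd}(R^{\flat})$ to those cut out by $p^{\flat}$-adically closed radical ideals (via \Cref{ClosuresOfRadicalIdeals}), and conclude by inclusion-preservation of the mutually inverse maps. The only cosmetic difference is that you take the radical before the $p$-adic closure, whereas the paper applies $p$-adic closure and then perfectoidization directly; both yield the same reduction.
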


\begin{proof}
    It is readily seen that the map $J\mapsto J^{\flat}$ takes ($p$-adically closed) prime ideals of $R$ to prime ideals of $R^{\flat}$.
    By \Cref{SharpPrimetoPrime}, the map $I\mapsto I^\sharp$ also takes ($p^{\flat}$-adically closed) prime ideals of $R^{\flat}$ to (perfectoid) prime ideals of $R$. 
    By \Cref{CorrespPerfdIdeals}, the two maps $J \mapsto J^{\flat}$ and $I\mapsto I^\sharp$ are inverse to each other, so we obtain a bijection $\Spec_{\perfd}(R) \xrightarrow{\simeq} \Spec_{\perfd}(R^{\flat})$. 

    To see that this bijection is a homeomorphism, note that a perfectoid prime ideal $\mathfrak{p}$ of $R$ contains an ideal $J$ if and only if it contains the perfectoid ideal $(\overline{J}^{p})_{\perfd}$:
    Indeed, if $J \subseteq \mathfrak{p}$, we have $\overline{J}^{p}\subseteq \mathfrak{p}$ since perfectoid ideals are $p$-adically closed, and then the map of semiperfectoid rings 
        \begin{equation*}
            R/\overline{J}^{p} \to R/\mathfrak{p}
        \end{equation*}
    factors through $R/(\overline{J}^{p})_{\perfd}$ by the universal property of perfectoidization. 
    Hence every closed subset of $\Spec_{\perfd}(R)$ is of the form 
    \begin{equation*}
        \mathcal{V}_{\Spec_{\perfd}(R)}(J)= \set{\mfrakp \in \Spec_{\perfd}(R)}{\mfrakp \supseteq J}
    \end{equation*}
    for some perfectoid ideal $J$ of $R$.

    Using \Cref{ClosuresOfRadicalIdeals}, every closed subset of $\Spec_{\perfd}(R^{\flat})$ is of the form $\mathcal{V}_{\Spec_{\perfd}(R^{\flat})}(I)$ for some $p^{\flat}$-adically closed radical ideal \(I\) of $R^{\flat}$.
    Since the mutually inverse maps $J \mapsto J^{\flat}$ and $I \mapsto I^\sharp$ are inclusion-preserving, we see that 
    \begin{equation*}
        (\mathcal{V}_{\Spec_{\perfd}(R)}(J))^{\flat} = \mathcal{V}_{\Spec_{\perfd}(R^{\flat})}(J^{\flat})
    \end{equation*}
    and 
    \begin{equation*}
        (\mathcal{V}_{\Spec_{\perfd}(R^{\flat})}(I))^\sharp = \mathcal{V}_{\Spec_{\perfd}(R)}(I^\sharp)
    \end{equation*}
    for any perfectoid ideal $J$ of $R$ and any $p^{\flat}$-adically closed radical ideal $I$ of $R^{\flat}$.
    It follows that the map 
    \begin{equation*}
        \Spec_{\perfd}(R) \to \Spec_{\perfd}(R^{\flat}), \mathfrak{p} \mapsto \mathfrak{p}^{\flat},
    \end{equation*}
    is a homeomorphism and its inverse map is \(\mfrakq \mapsto \mfrakq^\sharp\).
\end{proof}

Let us discuss the relationship between the above theorem and the main result of \cite{dine2022Topologicala}. 
Recall that the Berkovich spectrum $\mathcal{M}(A)$ of a seminormed ring $(A, \norm{\cdot})$ (introduced for Banach rings in \cite{berkovich2012Spectral}) is the topological space of bounded multiplicative seminorms $\map{\phi}{A}{\setR_{\geq 0}}$ on $A$, endowed with the weakest topology making each of the maps 
\begin{equation*}
    \mathcal{M}(A) \to \mathbb{R}_{\geq 0}, \phi \mapsto \phi(f),
\end{equation*}
for $f\in A$, continuous.

We note that if there is a topologically nilpotent unit $t\in A$ which is multiplicative with respect to the seminorm on $A$, then, by \cite[Lemma 2.11]{dine2022Topologicala}, for every continuous multiplicative seminorm $\map{\phi}{A}{\setR_{\geq 0}}$, there exists $s \in (0,1)$ such that $\phi^{s} \in \mathcal{M}(A)$ (in fact, if the seminorm $\phi$ satisfies $\phi(t) = \norm{t}$, then $\phi\in \mathcal{M}(A)$, see the proof of loc.~cit.).

If $A$ is a perfectoid Tate ring, viewed as a uniform Banach ring by choosing a power-multiplicative norm defining the topology on $A$, then we have a canonical map 
\begin{equation*}
    \mathcal{M}(A) \to \mathcal{M}(A^{\flat}), \phi \mapsto \phi^{\flat},
\end{equation*}
where the seminorm $\map{\phi^{\flat}}{A^\flat}{\setR_{\geq 0}}$ is defined by $\phi^{\flat}(f) \defeq \phi(f^\sharp)$ for all $f\in A^{\flat}$. 
By \cite[Theorem 3.3.7(c)]{kedlaya2015Relativea} (which is an analogue for Berkovich spectra of \cite[Corollary 6.7(iii)]{scholze2012Perfectoida}), this map is a homeomorphism $\mathcal{M}(A) \xrightarrow{\simeq} \mathcal{M}(A^{\flat})$, for every perfectoid Tate ring $A$;
we denote by $\phi \mapsto \phi^\sharp$ its inverse. 

The following proposition shows that the untilting operation $(-)^\sharp$ introduced in this paper (\ref{DefUntilt}) generalizes the analytically defined untilting operation (denoted by the same symbol) which was introduced in the context of perfectoid Tate rings in \cite[Section 4]{dine2022Topologicala}.

\begin{proposition} \label{TwoUntiltingMaps}
    Let $R$ be a perfectoid ring of characteristic $0$, let $\pi$ be a perfectoid element of $R$ and choose an element $\pi^{\flat}$ of $R^{\flat}$ such that $\pi^{\flat\sharp}$ is a unit multiple of $\pi$ in $R$ (this is always possible by \cite[Lemma 3.9]{bhatt2018Integral}). Let $I$ be a $p^{\flat}$-adically closed (or, equivalently, $\pi^{\flat}$-adically closed, see \Cref{Perfectoid elements and untilting}) radical ideal of $R^{\flat}$ such that $R^{\flat}/I$ is $\pi^{\flat}$-torsion-free. 
    Endow the perfectoid Tate ring $R[1/\pi]$ (respectively, $(R[1/\pi])^{\flat}=R^{\flat}[1/\pi^{\flat}]$ \footnote{This isomorphism is deduced from \cite[2.1.3]{cesnavicius2023Purity}}) with a power-multiplicative norm $\norm{\cdot}$ defining its topology such that the element $\pi \in R[1/\pi]$ (respectively, $\pi^{\flat} \in (R[1/\pi])^{\flat}$) is a multiplicative element with respect to $\norm{\cdot}$ (note that such a power-multiplicative norm always exists by \cite[Remark 2.8.18]{kedlaya2015Relativea} in conjunction with \cite{dine2022Topologicala}, Remark 2.13). 
    Then we have
    \begin{equation*}
        I^\sharp = \bigcap_{\substack{\phi \in \mathcal{M}(R^{\flat}[1/\pi^{\flat}]) \\ \ker(\phi) \supseteq I}}\ker(\phi^\sharp) \cap R.
    \end{equation*}
\end{proposition}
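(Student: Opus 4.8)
The plan is to reduce the statement to the analytically defined untilting operation of \cite[Section 4]{dine2022Topologicala} via the perfectoid Tate ring $R[1/\pi]$ and its tilt $R^{\flat}[1/\pi^{\flat}]$, and then to descend back down to $R$ using $\pi^{\flat}$-torsion-freeness. First I would observe that, by \Cref{TiltandUntiltCorresp}, $I^{\sharp}$ is a perfectoid ideal of $R$, so $R/I^{\sharp}$ is perfectoid with tilt $(R/I^{\sharp})^{\flat}\cong R^{\flat}/I$ (again \Cref{TiltandUntiltCorresp}); moreover $R/I^{\sharp}$ is $\pi$-torsion-free, since its tilt $R^{\flat}/I$ is $\pi^{\flat}$-torsion-free by hypothesis (this equivalence is the algebraic content of \cite[2.1.3]{cesnavicius2023Purity}). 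Hence the canonical map $R/I^{\sharp}\to (R/I^{\sharp})[1/\pi]\cong R[1/\pi]/I^{\sharp}[1/\pi]$ is injective, so $I^{\sharp}$ is the preimage of $I^{\sharp}[1/\pi]$ under $R\to R[1/\pi]$, i.e. $I^{\sharp} = I^{\sharp}[1/\pi]\cap R$. This reduces the problem to identifying $I^{\sharp}[1/\pi]$ with the closed radical ideal $\bigcap_{\phi}\ker(\phi^{\sharp})$ inside $R^{\flat}[1/\pi^{\flat}]^{\sharp} = R[1/\pi]$, where $\phi$ ranges over $\mathcal{M}(R^{\flat}[1/\pi^{\flat}])$ with $\ker(\phi)\supseteq I$.

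Next I would analyze $I[1/\pi^{\flat}]$, which by hypothesis is a closed (indeed $\pi^{\flat}$-adically closed, hence topologically closed) radical ideal of the perfectoid Tate ring $R^{\flat}[1/\pi^{\flat}]$, and $R^{\flat}/I$ is $\pi^{\flat}$-torsion-free so $I[1/\pi^{\flat}]\cap R^{\flat} = I$. By \Cref{IdealsEquivCharp} (second statement), a closed radical ideal of a complete Tate ring of characteristic $p$ is automatically spectrally reduced; therefore $I[1/\pi^{\flat}]$ is a spectrally reduced ideal of $R^{\flat}[1/\pi^{\flat}]$, which means
\begin{equation*}
    I[1/\pi^{\flat}] = \bigcap_{\substack{\phi \in \mathcal{M}(R^{\flat}[1/\pi^{\flat}]) \\ \ker(\phi)\supseteq I}}\ker(\phi),
\end{equation*}
where one uses \cite[Lemma 2.11]{dine2022Topologicala} (and the paragraph preceding this proposition) to pass from continuous multiplicative seminorms to bounded ones, exploiting that $\pi^{\flat}$ is multiplicative for the chosen norm. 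Then applying the untilting homeomorphism $\mathcal{M}(R^{\flat}[1/\pi^{\flat}])\xrightarrow{\simeq}\mathcal{M}(R[1/\pi])$, $\phi\mapsto\phi^{\sharp}$, and using that $\ker(\phi)\supseteq I$ is equivalent to $\ker(\phi^{\sharp})\supseteq I^{\sharp}[1/\pi]$, I get that $\bigcap_{\phi}\ker(\phi^{\sharp})$ is the spectral radical of $I^{\sharp}[1/\pi]$ in $R[1/\pi]$. Since $I^{\sharp}[1/\pi]\cong (R/I^{\sharp})[1/\pi]$ is the ideal of a perfectoid Tate ring, which is uniform, it is already spectrally reduced, so this spectral radical equals $I^{\sharp}[1/\pi]$ itself.

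The crux is the compatibility of the two untilting operations on ideals, namely that the analytic untilt $\phi\mapsto\phi^{\sharp}$ on $\mathcal{M}(R^{\flat}[1/\pi^{\flat}])$ sends $\ker(\phi)$ to the ideal of $R[1/\pi]$ generated by $\{f^{\sharp} : f\in\ker(\phi)\}$ in a way compatible with the algebraic $W(-)$-construction of \Cref{TiltUntiltIdeal}; concretely, I need $\ker(\phi^{\sharp})\cap R = \theta_{R}(W(\ker(\phi)\cap R^{\flat}))$-saturated. The cleanest route is via quotients: for $\phi\in\mathcal{M}(R^{\flat}[1/\pi^{\flat}])$ with $\ker(\phi)\supseteq I$, the quotient $R^{\flat}[1/\pi^{\flat}]/\ker(\phi)$ is a perfectoid Tate field-like quotient whose untilt (in the analytic sense of \cite{dine2022Topologicala}) is $R[1/\pi]/\ker(\phi^{\sharp})$, and one checks that the corresponding algebraic untilt of $\ker(\phi)\cap R^{\flat}$ (a $p^{\flat}$-adically closed radical, hence perfectoid, ideal of $R^{\flat}$, since it is topologically closed and $R^{\flat}$ has characteristic $p$) is exactly $\ker(\phi^{\sharp})\cap R$, by \Cref{CorrespPerfdIdeals} applied to both $R$ and its quotients together with the functoriality of $(-)^{\flat}$ in \Cref{TiltandUntiltCorresp}. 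I expect this matching — tracking through the two a priori different definitions of "untilt of a seminorm kernel" and showing they give the same ideal after intersecting with $R$ — to be the main obstacle, and it will rely essentially on the isomorphism $R/I^{\sharp}\cong W(R^{\flat}/I)/(\xi)$ together with the fact that $\theta$ and the analytic untilting of \cite{dine2022Topologicala} are both built from compatible systems of $p$-power roots. Once this identification is in hand, combining it with the displayed formula $I[1/\pi^{\flat}]=\bigcap_{\phi}\ker(\phi)$ and intersecting with $R$ via $I^{\sharp} = I^{\sharp}[1/\pi]\cap R$ yields the claimed equality.
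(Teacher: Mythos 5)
Your plan follows a genuinely different route from the paper's. You attempt a per-seminorm argument: reduce to showing \(I^\sharp[1/\pi]=\bigcap_\phi\ker(\phi^\sharp)\), establish \(I[1/\pi^\flat]=\bigcap_\phi\ker(\phi)\), and then try to transfer across via the equivalence ``\(\ker(\phi)\supseteq I\iff\ker(\phi^\sharp)\supseteq I^\sharp[1/\pi]\).'' That equivalence is exactly the compatibility of the analytic untilt of \cite{dine2022Topologicala} with the Witt-vector untilt of this paper, and this is where your proof has a real gap: you identify it as the ``main obstacle'' and sketch a plan via quotients, but you do not carry it out, and the step is not a formality. Making it precise amounts to showing, for each \(\phi\in\mathcal M(R^\flat[1/\pi^\flat])\) with \(\ker(\phi)\supseteq I\), that the perfectoid ideal \((\ker(\phi)\cap R^\flat)^\sharp\) of \(R\) coincides with \(\ker(\phi^\sharp)\cap R\); this requires first checking that \(\ker(\phi^\sharp)\cap R\) is a perfectoid ideal and that its tilt is \(\ker(\phi)\cap R^\flat\), and only then invoking the injectivity of \((-)^\flat\) from \Cref{CorrespPerfdIdeals}. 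In other words, your ``cleanest route via quotients'' is essentially rerunning the whole argument on the special ideals \(\ker(\phi)\cap R^\flat\) before intersecting — it would work, but you would need to actually do it.

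The paper avoids this per-seminorm matching entirely. It cites \cite[Proposition 4.9]{dine2022Topologicala}, which already packages the map \(\mathcal I\mapsto\bigcap_{\ker\phi\supseteq\mathcal I}\ker(\phi^\sharp)\) as the inverse of tilting on spectrally reduced ideals of the perfectoid Tate ring, and then shows directly that the candidate \(I\mapsto\bigcap_\phi\ker(\phi^\sharp)\cap R\) is another right inverse to \(J\mapsto J^\flat\) on perfectoid ideals (using that the tilt of this intersection is the preimage of \(I[1/\pi^\flat]\), which equals \(I\) by \(\pi^\flat\)-torsion-freeness, together with \Cref{CorrespSpecRed}). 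Since by \Cref{CorrespPerfdIdeals} the algebraic map \(I\mapsto I^\sharp\) is the unique such inverse, the two must agree. This uniqueness-of-inverses argument is what your plan is missing: rather than proving the per-\(\phi\) compatibility, one shows in one stroke that both operations invert \((-)^\flat\) and so coincide. If you restructure your step~3 accordingly — show directly that the tilt of \(\bigcap_\phi\ker(\phi^\sharp)\cap R\) equals \(I\) and conclude by uniqueness — the rest of your reduction (steps~1 and~2, including the use of \Cref{TiltandUntiltCorresp} for \(\pi\)-torsion-freeness of \(R/I^\sharp\) and \Cref{IdealsEquivCharp} for the spectral-reducedness of \(I[1/\pi^\flat]\)) is correct and matches the ingredients the paper uses.
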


\begin{proof}
    By \cite[Proposition 4.9]{dine2022Topologicala},
    for any spectrally reduced ideal \(\mcalI\) of \((R[1/\pi])^\flat\),
    the map
    \begin{equation*}
        \mcalI \mapsto \bigcap_{\substack{\phi\in\mathcal{M}(R^{\flat}[1/\pi^{\flat}]) \\ \ker(\phi) \supseteq \mcalI}} \ker(\phi^\sharp)
    \end{equation*}
    is inverse to the map $\mcalJ \mapsto \mcalJ^{\flat}$ from the set of spectrally reduced ideals of $R[1/\pi]$ to the set of spectrally reduced ideals (or, equivalently, closed radical ideals, see \cite[Remark 2.21]{dine2022Topologicala}) of $(R[1/\pi])^{\flat}=R^{\flat}[1/\pi^{\flat}]$.
    But for an ideal $I$ of $R^{\flat}$ and $\phi \in \mathcal{M}(R^{\flat}[1/\pi^{\flat}])$,
    we have $\ker(\phi) \supseteq I[1/\pi^{\flat}]$ if and only if $\ker(\phi) \supseteq I$, and by \Cref{CorrespSpecRed} a radical ideal $J$ of $R$ is perfectoid if and only if the ideal $J[1/\pi]$ of $R[1/\pi]$ is spectrally reduced.

    Consequently, 
    \begin{equation*}
        I \mapsto \bigcap_{\substack{\phi \in \mathcal{M}(R^{\flat}[1/\pi^{\flat}]) \\ \ker(\phi) \supseteq I}}\ker(\phi^\sharp)\cap R
    \end{equation*}
    is inverse to the map $J \mapsto J^{\flat}$ from the set of perfectoid ideals of $R$ to the set of $p^{\flat}$-adically closed radical ideals of $R^{\flat}$:
    Indeed, the tilt of the right hand side is equal to the inverse image of \(I[1/\pi^\flat]\) via \(R^\flat \to \overline{R^\flat}[1/\pi^\flat]\) by the above arguments and the inverse image is \(I\) because \(R^\flat/I\) is \(\pi^\flat\)-torsion-free.
    By \Cref{CorrespPerfdIdeals}, the map $I \mapsto I^\sharp$ is also inverse to $J \mapsto J^{\flat}$. 
    The assertion follows.
\end{proof}

\begin{remark}
    Note that the space $\Spec_{\perfd}(R^{\flat})$ of \(p^\flat\)-adically closed (or, by \Cref{Perfectoid elements and untilting}, of $\pi^{\flat}$-adically closed) prime ideals of \(R^\flat\) can be written as
    \begin{equation*}
        \Spec_{\Top}(R^\flat[1/\pi^\flat]) \cup \Spec(R^\flat/(\pi^\flat))=\Spec_{\Top}(R^\flat[1/\pi^\flat]) \cup \Spec(R/(\pi)).
    \end{equation*}
    If we restrict the tilting map \((-)^\flat\) to \(\Spec_{\Top}(R[1/\pi]) = D_R(\pi) \cap \Spec_{\perfd}(R)\),
    the homeomorphism induces a homeomorphism of topological spectra \(\Spec_{\Top}(R[1/\pi]) \cong \Spec_{\Top}(R^\flat[1/\pi^\flat]) \cong \Spec_{\Top}((R[1/\pi])^\flat)\) by \Cref{EqualitySomeSpectrum}.
    
    Moreover, by \Cref{TwoUntiltingMaps}, the analogous restriction of the inverse map $(-)^\sharp$ is the same as the map $(-)^\sharp$ defined in \cite[Section 4]{dine2022Topologicala}, and thus \Cref{CorrespPrimePerfdIdeals} provides a new proof of the main result of \cite{dine2022Topologicala} (loc.~cit., Theorem 4.16): For a perfectoid Tate ring $A$ of characteristic $0$, choose a topologically nilpotent unit $\pi$ of $A$ such that $\pi^{p}$ divides $p$ in $A^{\circ}$ and then apply the above arguments to the perfectoid ring $R=A^{\circ}$ and its perfectoid element $\pi\in A^{\circ}$.
    We note that this new proof of loc.~cit., Theorem 4.16, is more algebraic in nature in that, unlike the previous proof, it does not make use of the homeomorphism of Berkovich spectra 
    \begin{equation*}
        \mathcal{M}(R[1/\pi]) \simeq \mathcal{M}(R^{\flat}[1/\pi^{\flat}])
    \end{equation*}
    from \cite[Theorem 3.3.7(c)]{kedlaya2015Relativea}, or the homeomorphism of adic spectra 
    \begin{equation*}
        \Spa(R[1/\pi], R[1/\pi]^{\circ}) \simeq \Spa(R^{\flat}[1/\pi^{\flat}], R^{\flat}[1/\pi^{\flat}]^{\circ})
    \end{equation*}
    from \cite[Corollary 6.7(iii)]{scholze2012Perfectoida}. 

    
\end{remark}


\begin{remark}[{cf. \cite{kaplansky1947Topological} and \cite{dine2022Topologicala}}] \label{CounterexamplePerfdIdeal}
    There exist perfectoid rings $R$ of characteristic $0$ such that $\Spec_{\perfd}(R)$ is not equal to $\Spec(R)$ and $\Spec_{\perfd}(R^{\flat})$ is not equal to $\Spec(R^{\flat})$. An example is as follows.

    Let \(K\) be a perfectoid field with pseudo-uniformizer \(\pi\) and let \(X\) be an infinite totally disconnected compact Hausdorff space.
    Consider the Banach ring \(C(X, K)\) of continuous \(K\)-valued functions on \(X\) whose topology is defined by uniform convergence on all of \(X\).
    Then \(C(X, K)\) is a perfectoid Tate ring whose pseudo-uniformizer is the constant function \(x \mapsto \pi\) which we also denote \(\pi\) and \(C(X, K)^\circ=C(X, K^\circ)\) is a perfectoid ring.
    Moreover, any closed prime ideal of \(C(X, K)\) is maximal by \cite[Theorem 29]{kaplansky1947Topological}
    but \(C(X, K)\) has a prime ideal \(\mfrakp\) which is not a maximal one by the example after the proof of \cite[Theorem 30]{kaplansky1947Topological}.
    Thus \(\mfrakp\) is a non-closed ideal of \(C(X, K)\), in which case
    \begin{equation*}
        \mfrakp^\circ \defeq \mfrakp \cap C(X, K^\circ)
    \end{equation*}
    is a prime ideal of the perfectoid ring \(C(X, K^\circ)\) which is not closed with respect to the subspace topology induced from \(C(X, K^\circ)\) (if \(\mfrakp^\circ\) was closed, then the quotient \(C(X, K^\circ)/\mfrakp^\circ\) would be \(\pi\)-adically separated and then the quotient Tate ring \(C(X, K)/\mfrakp=(C(X, K^\circ)/\mfrakp^\circ)[1/\pi]\) would be Hausdorff, which is a contradiction).

    If the perfectoid field \(K\) is of characteristic \(0\) and contains \(\setQ_p\),
    we can choose \(p\) as \(\pi\) and then \(C(X, K^\circ)\) is equipped with the \(p\)-adic topology.
    This shows that \(C(X, K^\circ)/\mfrakp^\circ\) is not \(p\)-adically complete and, in particular, \(\mfrakp^\circ\) is not perfectoid.

    Similarly, in the above situation we can let \(\mfrakq\) be a non-closed prime ideal in the perfect(oid) Tate ring \(C(X, K^\flat)\) and then \(\mfrakq^\circ \defeq \mfrakq \cap C(X, K^{\flat \circ})\) is a prime ideal in the perfect(oid) ring \(C(X, K^{\flat \circ})\) of characteristic \(p\) which is not \(p^\flat\)-adically closed.
\end{remark}

As an application, we show that a perfectoid ring (not a perfectoid Tate ring) is an integral domain if and only if its tilt is an integral domain.
This contains (and is equivalent to) a previous result about perfectoid Tate rings (\cite[Corollary 4.17]{dine2022Topologicala}).

\begin{corollary} \label{PerfdDomain}
    Let $R$ be a perfectoid ring of characteristic $0$, let $\pi$ be a perfectoid element in $R$ and let $\pi^{\flat}\in R^{\flat}$ be such that $\pi^{\flat\sharp}$ is a unit multiple of $\pi$ in $R$.
    Then $R/R[\pi^{\infty}]$ is an integral domain if and only if $R^{\flat}/R^{\flat}[(\pi^{\flat})^\infty]$ is an integral domain. 
    In particular, \(R\) is an integral domain if and only if its tilt \(R^\flat\) is an integral domain.
\end{corollary}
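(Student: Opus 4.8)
The plan is to reduce the corollary to the ideal-theoretic correspondences of \Cref{CorrespPerfdIdeals}, \Cref{SharpPrimetoPrime} and \Cref{CorrespPrimePerfdIdeals}, applied to the particular ideals $R[\pi^{\infty}]$ of $R$ and $R^{\flat}[(\pi^{\flat})^{\infty}]$ of $R^{\flat}$. First I would record the elementary observation that, for any ring $S$ and any $t \in S$, the quotient $S/S[t^{\infty}]$ is an integral domain if and only if the ideal $S[t^{\infty}] = \ker(S \to S[1/t])$ is a prime ideal of $S$. Hence the first assertion of the corollary is equivalent to the statement that $R[\pi^{\infty}]$ is a prime ideal of $R$ if and only if $R^{\flat}[(\pi^{\flat})^{\infty}]$ is a prime ideal of $R^{\flat}$.

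The crux is the identity $(R[\pi^{\infty}])^{\flat} = R^{\flat}[(\pi^{\flat})^{\infty}]$. Note first that $R[\pi^{\infty}]$ is a perfectoid (in particular, $p$-adically closed) ideal of $R$, because $R/R[\pi^{\infty}]$ is a perfectoid ring by \cite[2.1.3]{cesnavicius2023Purity} (recalled in \Cref{Notation}), so its tilt $(R[\pi^{\infty}])^{\flat}$ is defined and is an ideal of $R^{\flat}$. To prove the identity, I identify $R^{\flat}$ with $\varprojlim_{y \mapsto y^{p}} R$ and write a general element as $x = (x^{(n)})_{n}$ and $\pi^{\flat} = ((\pi^{\flat})^{(n)})_{n}$, recording that $((\pi^{\flat})^{(n)})^{p^{n}} = \pi^{\flat\sharp} = u\pi$ for a unit $u \in R^{\times}$, that $(x^{(n)})^{p^{n}} = x^{\sharp} = x^{(0)}$, and that multiplication is coordinatewise, so that $(\pi^{\flat})^{m} x = 0$ in $R^{\flat}$ if and only if $((\pi^{\flat})^{(n)})^{m} x^{(n)} = 0$ in $R$ for every $n$. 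If $(\pi^{\flat})^{m} x = 0$, then multiplying the relation $((\pi^{\flat})^{(n)})^{m} x^{(n)} = 0$ by $((\pi^{\flat})^{(n)})^{m(p^{n}-1)}$ and using $((\pi^{\flat})^{(n)})^{p^{n}} = u\pi$ yields $u^{m} \pi^{m} x^{(n)} = 0$, hence $\pi^{m} x^{(n)} = 0$ for every $n$, so $x \in (R[\pi^{\infty}])^{\flat}$. Conversely, if $x^{(n)} \in R[\pi^{\infty}]$ for every $n$, choose $m$ with $\pi^{m} x^{\sharp} = 0$; then, using the two displayed relations, the $p^{n}$-th power of $((\pi^{\flat})^{(n)})^{m} x^{(n)}$ equals $u^{m} \pi^{m} x^{\sharp}$, which is $0$ by the choice of $m$, and since $R$ is reduced (being a perfectoid ring) this forces $((\pi^{\flat})^{(n)})^{m} x^{(n)} = 0$ for every $n$, i.e.\ $x \in R^{\flat}[(\pi^{\flat})^{\infty}]$.

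Granting the identity, \Cref{TiltandUntiltCorresp} shows that $R^{\flat}[(\pi^{\flat})^{\infty}] = (R[\pi^{\infty}])^{\flat}$ is a $p^{\flat}$-adically closed radical ideal of $R^{\flat}$ which corresponds to the perfectoid ideal $R[\pi^{\infty}]$ under the mutually inverse bijections of \Cref{CorrespPerfdIdeals}; in particular $(R^{\flat}[(\pi^{\flat})^{\infty}])^{\sharp} = R[\pi^{\infty}]$. Hence, if $R[\pi^{\infty}]$ is prime, then so is its tilt $R^{\flat}[(\pi^{\flat})^{\infty}]$ by \Cref{CorrespPrimePerfdIdeals}; conversely, if $R^{\flat}[(\pi^{\flat})^{\infty}]$ is prime, then, being $p^{\flat}$-adically closed, its untilt is prime by \Cref{SharpPrimetoPrime}, and that untilt is $R[\pi^{\infty}]$. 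This proves the first assertion. For the ``in particular'' part, observe that, since $R$ has characteristic $0$, the element $p$ is nonzero in $R$, so $\pi \neq 0$ (because $\pi^{p} \mid p$) and $\pi^{\flat\sharp} = u\pi \neq 0$, whence $\pi^{\flat} \neq 0$. Therefore $R$ (respectively $R^{\flat}$) is an integral domain if and only if $\pi$ (respectively $\pi^{\flat}$) is a non-zero-divisor and $R/R[\pi^{\infty}]$ (respectively $R^{\flat}/R^{\flat}[(\pi^{\flat})^{\infty}]$) is an integral domain; moreover $\pi$ is a non-zero-divisor in $R$ exactly when $R[\pi^{\infty}] = 0$, which, by the identity above together with the injectivity of $(-)^{\flat}$ on perfectoid ideals and the equality $0^{\flat} = 0$, is equivalent to $R^{\flat}[(\pi^{\flat})^{\infty}] = 0$, i.e.\ to $\pi^{\flat}$ being a non-zero-divisor in $R^{\flat}$. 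Combining these equivalences with the first assertion gives the ``in particular'' part.

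The step I expect to be the main obstacle is the identity $(R[\pi^{\infty}])^{\flat} = R^{\flat}[(\pi^{\flat})^{\infty}]$; the rest is a matter of assembling correspondences already established. I would take care that reducedness of $R$ is genuinely needed for the inclusion $(R[\pi^{\infty}])^{\flat} \subseteq R^{\flat}[(\pi^{\flat})^{\infty}]$, and that the degenerate cases in the ``in particular'' part (namely $\pi = 0$ or $\pi^{\flat} = 0$) are excluded precisely by the characteristic-$0$ hypothesis.
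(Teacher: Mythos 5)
Your proof is correct, and it arrives at the same place by a genuinely more hands-on route. The paper's own proof is shorter: it simply cites \cite[2.1.3]{cesnavicius2023Purity} for the fact that $R^{\flat}/R^{\flat}[(\pi^{\flat})^{\infty}]$ is the tilt of the perfectoid ring $R/R[\pi^{\infty}]$, thereby reducing both assertions to the $\pi$-torsion-free case, and then applies \Cref{CorrespPrimePerfdIdeals} to the zero ideal (which is perfectoid because $R/(0)=R$ is, and whose tilt is $(0)\subset R^{\flat}$, which is automatically $p^{\flat}$-adically closed). You instead prove the ideal-level identity $(R[\pi^{\infty}])^{\flat}=R^{\flat}[(\pi^{\flat})^{\infty}]$ directly by an elementwise computation in $\varprojlim_{y\mapsto y^{p}}R$ — using that multiplication there is coordinatewise and that $R$ is reduced, which indeed is exactly where reducedness is needed — and then feed this into \Cref{CorrespPrimePerfdIdeals} and \Cref{SharpPrimetoPrime} with the ideal $R[\pi^{\infty}]$ rather than with $(0)$. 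Your self-contained computation is a modest gain in that it replaces an appeal to \cite{cesnavicius2023Purity} (in fact, combined with \Cref{TiltandUntiltCorresp}, it gives an independent proof of that cited tilting formula), at the cost of being longer; your handling of the ``in particular'' clause via the equivalence ``$\pi$ is a non-zero-divisor $\Leftrightarrow$ $\pi^{\flat}$ is a non-zero-divisor'' is correct but slightly more roundabout than the paper's direct application of \Cref{CorrespPrimePerfdIdeals} to the zero ideal.
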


\begin{proof}
    By \cite[2.1.3]{cesnavicius2023Purity}, the $\pi^{\flat}$-torsion-free quotient $R^{\flat}/R^{\flat}[(\pi^{\flat})^\infty]$ is the tilt of the perfectoid ring $R/R[\pi^{\infty}]$.
    Hence it suffices to prove the assertion in the case when $R$ is $\pi$-torsion-free (and thus $R^{\flat}$ is $\pi^{\flat}$-torsion-free). 
    Since the zero ideal of $R$ is a perfectoid ideal, the assertion follows from \Cref{CorrespPrimePerfdIdeals}. 
\end{proof}

\begin{remark}
    Actually, the above corollary is a direct consequence of the previous result \cite[Corollary 4.17]{dine2022Topologicala}.
    In fact, as we saw in the proof of \Cref{PerfdDomain}, it suffices to prove the corollary in the case when $R$ is $\pi$-torsion-free (and $R^{\flat}$ is $\pi^{\flat}$-torsion-free).
    In this case, we have injective maps $R\hookrightarrow R[1/\pi]$ and $R^{\flat}\hookrightarrow R^{\flat}[1/\pi^{\flat}]$ and we can apply the previous result to conclude.
    However, our proof does not rely on the results of \cite{dine2022Topologicala}.
    In particular, we obtain a new proof of loc.~cit., Corollary 4.17, which is more algebraic in nature in the sense that it does not use homeomorphisms between Berkovich spectra or adic spectra.

\end{remark}




\renewbibmacro{in:}{}
\printbibliography 

\end{document}